\documentclass{amsart}

\usepackage{amsthm}
\usepackage{amssymb}
\usepackage{amsmath}
\usepackage{amsmath,amscd}
\usepackage{epsfig}
\usepackage{mathrsfs}
\usepackage{verbatim}

\newtheorem{theorem}{Theorem}
\newtheorem{corollary}{Corollary}
\newtheorem{lemma}{Lemma}
\newtheorem{proposition}{Proposition}

\newtheorem{definition}{Definition}
\newtheorem{remark}{Remark}

\newcommand{\e}[0]{\mathbf e}
\newcommand{\mb}{\mathbb}
\newcommand{\mc}{\mathcal}
\newcommand{\wt}{\widetilde}
\newcommand{\ol}{\overline}

\title{A Decay Estimate For The Stability Operator Of The Helicoid}

\author{Stephen J. Kleene}
\address{Department of Mathematics, Brown University, Providence, RI 02906}

\begin{document}
\maketitle
\begin{abstract}
We consider the Poisson Problem for the stability operator of the helicoid on a  vertical strip, under the assumption that the source term is supported on a strip of fixed height. We prove that solutions decay at a definite rate away from the support assuming natural orthogonality conditions on the source term.
\end{abstract}

\section{introduction}
In this article we consider the Poisson problem
\begin{align} \label{PoissonProblem}
\wt{\mc{L}}u = E
\end{align}
for the operator $\wt{\mc{L}} = \Delta_{\mb{R}^2} + 2 \cosh^{-2}(s)$ on domains of the form $R = [-\ell, \ell] \times \mb{R}$ in $\mb{R}^2$,  where we take $(s, z)$ as coordinates on $\mb{R}^2$ and where the source term  $E$ is supported on the strip $ [-\ell, \ell] \times [-2 \pi, 2 \pi]$. The operator $\wt{\mc{L}}$ is, up to the conformal factor $\cosh^{2}(s)$,  the stability operator of the helicoid in its standard conformal parametrization $F: \mb{R}^2 \rightarrow \mb{R}^3$  given by
\begin{align}\label{HelicoidParametrization}
F(s, z) = \sinh(s) \e_r(z) + z \e_z, \quad \e_{r}(z) = \cos(z) \e_x + \sin(z) \e_y,
\end{align}
and problem (\ref{PoissonProblem}) arises naturally in many geometric problems  involving both the  helicoid and the catenoid. In applications (such has \cite{BK}, \cite{BK2}), one frequently has symmetry assumptions which reduce the domain of (\ref{PoissonProblem}) to long cylinders  $[-\ell, \ell] \times \mb{S}^1$, and where the invertibility properties of the operator $\mc{L}$ are well understood. However,  in many interesting problems one wishes to relax these assumptions, in which case one hopes  symmetry  can be replaced by decay estimates. This article records such a decay estimate for source terms $E$  satisfying certain strong orthogonality conditions arising from  the lower  spectrum of the operator $\wt{\mc{L}}$. Precisely, we assume that the source term $E$ satisifies
\begin{align}\label{StrongOrthogonality}
\int_{-\ell}^{\ell} E(s, z) \cosh^{-1}(s)ds = \int_{-\ell}^{\ell} E(s, z) \tanh(s)ds, \quad z \in \mb{R}.
\end{align}
Our main theorem is then:
 \begin{theorem} \label{RBCSolutionsWeightedControl}
For any $\xi \in (0, 1/2)$, there is a constant $C = C(\xi)$ such that: Given a  $C^{0, \alpha}$ function $E(s, z): [-\ell, \ell] \times \mb{R} \rightarrow \mb{R}$ supported on the strip $[- \ell, \ell] \times [-2 \pi, 2 \pi]$ with $\| E\|_{0, \alpha} \leq \beta$ and satisfying the strong orthogonality conditions (\ref{StrongOrthogonality}), there is a unique locally $C^{2, \alpha}$ function $u: [- \ell, \ell] \times \mb{R}\rightarrow \mb{R}$ solving  (\ref{PoissonProblem}) and satisfying the orthogonality conditions (\ref{StrongOrthogonality}) and such that: 
\begin{enumerate}
\item \label{ODDRBS} The odd part $u^{-}(s ,z) : = \frac{1}{2}\left(u(s, z) - u(-s, z) \right)$ of $u$ satisfies the boundary condition
\[
u^{-}_{, s}(\ell, z) \tanh(\ell) - u^{-}(\ell, z) \tanh'(\ell) = 0, \quad \forall z \in \mb{R}.
\]
\item \label{EVENRBS} The even part $u^{+}(s ,z) : = \frac{1}{2}\left(u(s, z) +u(-s, z) \right)$ of $u$ satisfies the boundary condition
\[
u^{+}_{, s}(\ell, z) \cosh^{-1}(\ell) - u^{+}(\ell, z) \left(\cosh^{-1}\right)'(\ell) = 0, \quad \forall z \in \mb{R}.
\]
\end{enumerate}
Moreover,  $u$ satisfies the weighted estimate
 \begin{align} \notag
\left\| u \right\|_{2, \alpha}  \leq C \ell^{5/2}  \beta \left(\frac{1}{1 + |z|}\right)^{\xi}.
\end{align}

 \end{theorem}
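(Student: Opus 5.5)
Split into odd and even parts in $s$ and expand each in eigenfunctions of the one-dimensional operator $L_s = \partial_s^2 + 2\cosh^{-2}(s)$ on $[-\ell,\ell]$ with the stated Robin boundary conditions. These conditions are chosen so that $\tanh(s)$ (odd sector) and $\cosh^{-1}(s)$ (even sector) are exact eigenfunctions of the Robin problem. Indeed, both are Jacobi fields annihilated by $L_s$, and the boundary conditions (\ref{ODDRBS}) and (\ref{EVENRBS}) are precisely the Wronskians with them vanishing at $s=\ell$. So each sector has a distinguished eigenfunction with eigenvalue $0$: $\tanh$ is the lowest odd mode and $\cosh^{-1}$ is the lowest even mode, being positive.

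I read (\ref{StrongOrthogonality}) as requiring both integrals to vanish for each $z$. Then $E$ has no component along these zero modes, and I look for $u$ in the orthogonal complement. On that complement $L_s$ is self-adjoint with respect to $ds$, and I will show its spectrum lies in $(-\infty, -\lambda_\ell]$. For the next odd and even modes one compares with the operator on all of $\mb{R}$: the continuous spectrum starts at $0$, and on an interval of length $2\ell$ with Robin conditions that are asymptotically Neumann, the next eigenvalue is bounded away from $0$ by roughly $c/\ell^2$. Concretely I would prove, by a Sturm comparison or a direct Poincaré-type argument with the Robin boundary term, that
\[
-\int_{-\ell}^{\ell} \phi L_s\phi\,ds \ge \frac{c}{\ell^2}\int_{-\ell}^{\ell}\phi^2\,ds \quad\text{for } \phi \perp \{\tanh, \cosh^{-1}\}.
\]
This spectral gap estimate is the main obstacle. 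The boundary terms in the Robin form must be controlled, and any $\ell$-dependence is what produces the factor $\ell^{5/2}$.

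Writing $u(s,z)=\sum_k a_k(z)\phi_k(s)$, each coefficient solves $a_k'' - \mu_k a_k = e_k(z)$ with $\mu_k \ge c\ell^{-2}$ and $e_k$ supported in $|z|\le 2\pi$. The unique bounded solution is
\[
a_k = -\frac{1}{2\sqrt{\mu_k}}\, e^{-\sqrt{\mu_k}|\cdot|} * e_k,
\]
which gives existence and uniqueness among solutions of at most polynomial growth. Uniqueness in general follows from the energy identity in $z$ together with the gap. In $L^2$ this yields
\[
\|u(\cdot,z)\|_{L^2_s} \le C\ell^{2}\,e^{-c|z|/\ell}\,\|E\|_{L^2}.
\]

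To obtain the uniform bound, I sum over unit squares. Interior and boundary Schauder estimates for the Robin problem on the square $[-\ell,\ell]\times[z-1,z+1]$, applied to pieces of length $1$ in $s$, bound $\|u\|_{2,\alpha}$ locally by the local $L^2$ norm plus $\|E\|_{0,\alpha}$. Since $\|E\|_{L^2}\le C\ell^{1/2}\beta$, this gives $\ell^{5/2}\beta$ times the exponential factor $e^{-c|z|/\ell}$.

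The claim asks for the $\ell$-uniform decay $(1+|z|)^{-\xi}$ with $\xi<1/2$, so the factor $e^{-c|z|/\ell}$ must be converted. Exponential decay at rate $1/\ell$ alone does not give polynomial decay uniformly in $\ell$. For $|z|\lesssim \ell$ I would instead use the fact that the low modes, those with $\mu_k\sim k^2/\ell^2$, behave like the Fourier modes of a strip. For the modes with $\mu_k\le 1$, summing the kernels $\mu_k^{-1/2}e^{-\sqrt{\mu_k}|z|}|e_k|$ in $\ell^2$ bounds $|u|$ by $\ell\,(1+|z|)^{-1/2}$ up to constants. This is essentially the decay of the two-dimensional Green's function on a strip, integrated against a compactly supported source. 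The square root comes from summing the Gaussian-like series $\sum_k e^{-2k|z|/\ell}\approx \ell/|z|$. This gives any $\xi<1/2$, with the loss absorbed into $C(\xi)$ and the $\ell$-powers absorbed into $\ell^{5/2}$. The modes with $\mu_k\ge 1$ decay exponentially at rate $1$.
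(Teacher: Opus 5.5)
Your route (spectral decomposition in $s$, explicit solution of the $z$-ODEs, exponential decay) is genuinely different from the paper's, and most of it is sound. However, the final step, which is supposed to deliver the weighted estimate, fails as argued.

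\textbf{The final step.} There is no $|z|^{-1/2}$ decay mechanism of the kind you describe. The strip Green's function $-\sum_k (2\sqrt{\mu_k})^{-1}e^{-\sqrt{\mu_k}|z-z'|}\phi_k(s)\phi_k(s')$ is logarithmic for $|z-z'|\ll\ell$ and decays like $e^{-\sqrt{\mu_1}|z|}$ beyond that. Since $E$ fills the whole width of the strip, $u$ need not decay at all for $|z|\lesssim\ell$. Here is a concrete example.
\begin{itemize}
\item Let $\phi_1$ be the first normalized even Robin eigenfunction orthogonal to $\cosh^{-1}$. It is approximately $\ell^{-1/2}\sin(\pi|s|/\ell)$, with $\mu_1\approx\pi^2/\ell^2$.
\item Take $E=\beta\ell^{1/2}\chi(z)\phi_1(s)$, where $0\le\chi\le 1$ is a bump supported in $[-2\pi,2\pi]$. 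Then $\|E\|_{0,\alpha}\le C\beta$.
\item The solution is $u=a_1(z)\phi_1(s)$ with $|a_1(z)|\ge c\beta\ell^{3/2}$ for $|z|\le\ell$. Hence $\sup_s|u(s,z)|\ge c\beta\ell$ on that whole range.
\end{itemize}
So a bound $C\beta\ell^{a}(1+|z|)^{-1/2}$ can hold only if $a\ge 3/2$, and your stated $\ell(1+|z|)^{-1/2}$ is false. The factor $\sqrt{\ell/|z|}$ from $\sum_k e^{-2k|z|/\ell}$ appears only if you discard the weights $\mu_k^{-1}\approx\ell^2/k^2$ and bound every $|e_k|$ by the same constant. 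What it then yields is just the exponential bound below, rewritten via $e^{-ct}\le Ct^{-1/2}$.

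\textbf{The repair.} The theorem is not uniform in $\ell$: the prefactor $\ell^{5/2}$ leaves room to convert exponential decay directly, provided the powers of $\ell$ are counted correctly.
\begin{itemize}
\item Your $L^2$ estimate loses a factor of $\ell$. Since $\mu_k^{-1/2}\le C\ell$ and $\sum_k\|e_k\|^2_{L^2_z}=\|E\|^2_{L^2}\le C\ell\beta^2$, you get $\|u(\cdot,z)\|_{L^2_s}\le C\ell^{3/2}\beta e^{-c|z|/\ell}$.
\item Apply Schauder to $u^{+}$ and $u^{-}$ separately, since each satisfies an honest Robin condition. This gives $\|u\|_{2,\alpha}\le C\ell^{3/2}\beta e^{-c|z|/\ell}$, plus $C\beta$ near the support.
\item For $\ell\ge 1$ one has $e^{-c|z|/\ell}\le C_\xi(1+|z|/\ell)^{-\xi}\le C_\xi\ell^{\xi}(1+|z|)^{-\xi}$. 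Since $3/2+\xi\le 5/2$, this gives the claim, in fact for every $\xi\le 1$.
\end{itemize}
With your count of $\ell^{5/2}\beta e^{-c|z|/\ell}$, the same conversion overshoots by $\ell^{\xi}$, so the sharper bookkeeping is necessary.

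\textbf{Comparison with the paper.} Once repaired, your argument proves exponential decay at rate $c/\ell$, which is stronger than what the paper obtains. The paper uses only convexity of $F(z)=\int u^2\,ds$ for $|z|>2\pi$, together with integrability, to get $F\lesssim\ell^5\beta^2/|z|$. It then runs a cubic bootstrap whose fixed point is the exponent $1/2$. The same exponential decay is available in the paper's framework, since its Poincar\'e inequalities actually give $F''\ge c\ell^{-2}F$.

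\textbf{Smaller corrections.}
\begin{itemize}
\item $\cosh^{-1}$ is not annihilated by $\partial_s^2+2\cosh^{-2}(s)$. In fact $(\partial_s^2+2\cosh^{-2})\cosh^{-1}=\cosh^{-1}$, so it is the even Robin ground state with eigenvalue $+1$. This is exactly why $\cos(z)\cosh^{-1}(s)$ and $\sin(z)\cosh^{-1}(s)$ lie in the kernel of $\wt{\mc{L}}$. It is harmless for your argument because you project it out.
\item The even Robin condition is $u_s=-\tanh(\ell)u$, which is not asymptotically Neumann. Your gap of order $\ell^{-2}$ is still true. It follows from Lemmas \ref{fOrthogonalPart} and \ref{UnweightedPoincare} plus the flat Poincar\'e inequality: $\int f^2\le\int g^2\le\ell^2\int g_s^2\le C\ell^2 e_\ell(f)$.
\item Uniqueness holds only in a growth class, such as bounded solutions. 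The function $e^{\sqrt{\mu_1}z}\phi_1(s)$ solves the homogeneous problem with all the boundary and orthogonality conditions.
\end{itemize}
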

Above, $\|u\|_{k, \alpha}$ denotes the localized $C^{k, \alpha}$ holder norm for a function $u: \Lambda \rightarrow \mb{R}$, so
\[
\| u\|_{k, \alpha} (p) : = \| u: C^{k, \alpha} (\Lambda \cap D_p) \|
\]
where $D_p \subset \mb{R}^2$ denotes the unit disk in $\mb{R}^2$ centered at $p$.

\subsection{Structure of the proof of Theorem \ref{RBCSolutionsIntegralControl}}
The basic strategy for proving Theorem \ref{RBCSolutionsIntegralControl} is to search for solutions $u_N$ on domains $[-\ell, \ell] \times [-N, N]$ with appropriate boundary values, and to prove uniform integral  estimates for $u_N$ in $N$,  which can then be improved to decay estimates. For fixed large $N$ and $\ell$ it can be shown that the operator $\wt{\mc{L}}$ has trivial dirichlet kernel on  $[-\ell, \ell] \times [-N, N]$, however the gap at $0$ decays exponentially in $\ell$. Thus, though solutions to (\ref{PoissonProblem}) with Dirichlet boundary conditions on $[-\ell, \ell] \times [-N, N]$ exist without assuming any orthogonality on the source term $E$, their bounds are poorly behaved in $\ell$. The bounded kernel of $\wt{\mc{L}}$ on $\mb{R}^2$ is spanned by the functions $\tanh(s)$, $\cos(z) \cosh^{-1}(s)$, and $\sin(z) \cosh^{-1}(s)$, and these functions are the principal obstructions to obtaining uniform integral control of the solutions.  The Poisson problem  (\ref{PoissonProblem}) can be solved  with boundary conditions (\ref{ODDRBS}) and (\ref{EVENRBS}) in the Statement of Theorem \ref{RBCSolutionsWeightedControl} on the odd and even parts of $u_{N}$, respectively--as well as Neumann condition along the boundary component $z = \pm N$--and the strong orthogonality condition (\ref{StrongOrthogonality}) on  the source term is  inherited by the solutions $u_N$. We can then apply a one-dimensional weighted  Poincare Inequality to obtain $L^2$  and this $C^{2, \alpha}$ control on the solutions $u_N$, which thus converge on compact subsets of the strip $[-\ell, \ell] \times \mb{R}$ to a limiting function $u$ solving (\ref{PoissonProblem}). The Poincare Inequality then implies convexity of the integrals $\int_{-\ell}^\ell u^2 ds$ which, combined with the uniform integral control implies decay. Schauder estimates and a bootstrapping argument then improves the integral decay to pointwise decay. We Remark that the arguments presented in this paper can likely be generalized to operators other than stability of the Helicoid,  however we have not pursued this here.

\subsection{Outline of the article}
In Section \ref{Sec:AprioriEstimates} we prove Proposition \ref{RBCSolutionsIntegralControl},  which records a-priori  $L^2$ and $C^{0}$  estimates  for solutions to (\ref{PoissonProblem}). We impose boundary conditions on the even and odd parts separately which allow for the orthogonality conditions  (\ref{StrongOrthogonality}) imposed on source terms to be recovered in the solutions. As a consequence, we characterize the kernel of the operator $\wt{\mc{L}}$ with these imposed boundary values in Corollaries \ref{WhatsInTheRobinKernel} and \ref{WhatsInTheRobinKernelAgain}. The principal tool used in the proof of   Proposition \ref{RBCSolutionsIntegralControl} is the one-dimensional weighted Poincare Inequality recorded in Propositions \ref{PoincareInequality} and \ref{PoincareInequalityEven}.  Section \ref{Sec:Solvability}   records Proposition \ref{StripInvertibilityWithRBC}, which proves the existence of solutions  $u_N$ to (\ref{PoissonProblem}) for  source terms satisfying the strong orthogonality conditions  with  uniform $L^2$ and $C^{0}$ estimates on arbitrarily tall rectangles $[-\ell, \ell] \times [-N, N]$. Proposition   \ref{RBCSolutionsWeightedControl} then  follows from a bootstrapping argument using  local estimates (Schauder Theory)  and the uniform $L^2$ control on the solutions. This is recorded in Section \ref{Sec:ProofOfMainTheorem}.  
  
  \section{Apriori estimates for solutions:  Proposition \ref{RBCSolutionsIntegralControl} and one dimension Poincare inequalities}\label{Sec:AprioriEstimates}
 \begin{proposition} \label{RBCSolutionsIntegralControl}
 There is a universal constant $C$ so that: Given a function  $u(s, z): [-\ell, \ell] \times [- N, N] \rightarrow \mb{R}$ satisfying the following conditions:
\begin{enumerate}
\item \label{SolutionRobinBoundaryCondition} The odd part $u^{-}$ of $u$ satisfies the Robin boundary condition
\begin{align} \notag
\partial_s u^{-} (\ell, z) \tanh(\ell) - u^{-}_N(\ell, z)/\cosh^2(\ell) = 0.
\end{align}

\item \label{SolutionRobinBoundaryCondition2}  The even part  $u^{+}$ of $u$ satisfies the Robin boundary condition
\begin{align} \notag
\partial_su^{+} (\ell, z) \cosh^{-1}(\ell) - u^{+}(\ell, z)\left(\cosh^{-1}\right)'(\ell) = 0.
\end{align}

\item \label{SolutionNeumanBoundaryCondition} $u$ satisfies the Neuman boundary condition
\begin{align} \notag
\partial_z u(s, \pm N) = 0.
\end{align}
\item  \label{SolutionStrongOrthogonality}  $u$ satisfies the orthogonality conditions
\[
\int_{-\ell}^{\ell} u (s, z)\tanh(s) ds = \int_{-\ell}^{\ell} u (s, z)\cosh^{-1}(s) ds, \quad \forall z
\]
\end{enumerate}
Then it holds that 
 \begin{align} \notag
\sup |u_N|, \quad  \| u_N \|_{L^2}   \leq C \ell^2 \left(\| \wt{\mc{L}} u\|_{L^2}\right)
 \end{align}
 \end{proposition}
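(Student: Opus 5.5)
The plan is an energy estimate on the rectangle $[-\ell,\ell]\times[-N,N]$. In it, the $z$-derivatives are discarded as nonnegative, and all the coercivity comes from a one-dimensional spectral gap, of size $\sim \ell^{-2}$, for the slice operator $L_0 = \partial_s^2 + 2\cosh^{-2}(s)$ on $[-\ell,\ell]$. I read condition (\ref{SolutionStrongOrthogonality}) as saying that both integrals vanish for every $z$. Write $E = \wt{\mc{L}} u$.

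First I split $u = u^+ + u^-$. Since $\wt{\mc{L}}$ commutes with $s\mapsto -s$, the cross terms $\int u^+ u^-$ and $\int \nabla u^+\cdot\nabla u^-$ vanish, and so does the boundary cross term $[u^\pm u^\mp_{,s}]_{-\ell}^{\ell}$. I then multiply $\wt{\mc{L}}u = E$ by $-u$ and integrate over the rectangle. By the Neumann condition (\ref{SolutionNeumanBoundaryCondition}) there is no boundary term at $z=\pm N$, so
\[
-\int E u = \int |u_{,z}|^2 + \int_{-N}^{N}\Big( Q^-(u^-(\cdot,z)) + Q^+(u^+(\cdot,z))\Big)dz .
\]
Here $Q^\pm(f) = \int_{-\ell}^{\ell} (f_{,s}^2 - 2\cosh^{-2}(s) f^2)\,ds - [f f_{,s}]_{-\ell}^{\ell}$. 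The Robin conditions (\ref{SolutionRobinBoundaryCondition}), (\ref{SolutionRobinBoundaryCondition2}) express $f_{,s}(\pm\ell)$ as a multiple of $f(\pm\ell)$. Hence $Q^\pm$ is exactly the quadratic form of $-L_0$ with the Robin condition whose kernel-type functions are $\tanh$ (odd case) and $\cosh^{-1}$ (even case). The key step is the slice inequality
\[
Q^\pm(f) \ge c\,\ell^{-2}\int_{-\ell}^{\ell} f^2\,ds
\]
for $f$ of the given parity, satisfying the corresponding Robin condition and orthogonal to $\tanh$, respectively $\cosh^{-1}$. This is the weighted Poincaré inequality. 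Granting it, Cauchy--Schwarz gives $c\ell^{-2}\|u\|_{L^2}^2 \le \|E\|_{L^2}\|u\|_{L^2}$, hence $\|u\|_{L^2}\le C\ell^2\|E\|_{L^2}$. As a by-product, $\int|u_{,z}|^2 \le C\ell^2\|E\|^2_{L^2}$.

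For the odd slice inequality I use the ground-state substitution $f = \tanh(s)\, g$. Since $L_0\tanh = 0$, and the Robin condition says precisely that the Wronskian of $f$ and $\tanh$ vanishes at $\pm\ell$, integration by parts gives $Q^-(f) = \int_{-\ell}^{\ell} \tanh^2(s)\, g_{,s}^2\, ds$ with no boundary term. The orthogonality condition becomes $\int \tanh^2 g = 0$ (weighted mean zero). A one-dimensional Poincaré inequality with weight $\tanh^2$ on an interval of length $2\ell$ then gives constant $\sim \ell^{-2}$. The weight degenerates only at $s=0$, where $g$ stays bounded; and $\tanh^2 \approx 1$ on most of the interval.

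The even case is the main obstacle. There $L_0\cosh^{-1} = \cosh^{-1}$, so $\cosh^{-1}$ is the positive ground state with eigenvalue $+1$. The substitution $f=\cosh^{-1}g$ gives $Q^+(f) = \int \cosh^{-2} g_{,s}^2 - \int f^2$, whose weight decays exponentially, so it is useless. Instead I argue spectrally. On even functions with this Robin condition, the operator $-L_0$ has lowest eigenvalue $-1$ with eigenfunction $\cosh^{-1}$. Orthogonality then reduces the claim to a lower bound $\gtrsim \ell^{-2}$ on the second even eigenvalue. I would get this from ODE analysis. Even bounded solutions of $L_0 \phi = -\lambda\phi$ are explicit: with $k^2=\lambda$, take the even part of $(\tanh s - ik)e^{iks}$. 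A nonnegative eigenvalue $\lambda<c\ell^{-2}$ would force, at $s=\ell$, a Robin matching that fails for such small $k$; this computation is where care is needed. Alternatively, I would compare with the odd case on $[0,\ell]$ by Sturm comparison.

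Finally, for the sup bound I use local $W^{2,2}$ estimates on unit squares, including squares at the Robin and Neumann boundaries, together with the embedding $W^{2,2}\subset C^0$ in two dimensions. This gives $\sup|u| \le C(\|u\|_{L^2} + \|E\|_{L^2}) \le C\ell^2\|E\|_{L^2}$.
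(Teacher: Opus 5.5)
Your proof is correct, and its core step is organized differently from the paper's. The quadratic forms agree: on each parity class your $Q^\pm$ equals $2e^\pm_\ell$.

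\textbf{How the paper gets the gap.} The paper measures coercivity of $e^\pm_\ell$ against the weight $\cosh^{-2}s$, not against unweighted $L^2$. It proceeds in four steps:
\begin{enumerate}
\item It writes $u=\alpha(z)\tanh s+\gamma(z)\cosh^{-1}s+g$, with $g$ orthogonal to $\tanh s\cosh^{-2}s$ and to $\cosh^{-2}s$.
\item It proves weighted Poincar\'e inequalities with an $\ell$-independent constant $P$. The proof uses compactness and the Gauss-map identification of the weighted eigenvalue problem with spherical harmonics.
\item A bilinear-form computation shows that removing the $\tanh$ and $\cosh^{-1}$ components does not increase $e_\ell$.
\item Only at the end does it lose $\ell^2$, through the flat inequality $\int g^2\le \ell^2\int g_s^2$ combined with $\int f^2\le\int g^2$.
\end{enumerate}

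\textbf{How your route differs.} You note that the Robin conditions say exactly that the Wronskians with $\tanh$ and $\cosh^{-1}$ vanish. So these functions are the bottom eigenfunctions of a self-adjoint slice problem, and hypothesis (4) is literally $L^2$-orthogonality to them. This removes the decomposition lemma and the compactness argument, and it gives explicit gaps.

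Your even-case computation, which you left open, does close. Set $\phi_k=k\cos(ks)-\tanh(s)\sin(ks)$. Then $\phi_k'+\tanh(s)\,\phi_k=-(1+k^2)\sin(ks)$, so the Robin condition holds exactly when $\sin(k\ell)=0$. The value $0$ is not an eigenvalue: the even zero-energy solution $1-s\tanh s$ gives $\phi'+\tanh\phi=-\ell\neq 0$ at $s=\ell$. Hence the even Robin spectrum of $-\partial_s^2-2\cosh^{-2}s$ is $\{-1\}\cup\{(n\pi/\ell)^2\}_{n\ge 1}$. Taking $u=\phi_{\pi/\ell}(s)$, independent of $z$, shows the factor $\ell^2$ in the statement is sharp.

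\textbf{What each buys.} The paper's route is more conceptual and does not depend on explicit solutions. Yours is more elementary and quantitative.

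\textbf{Two small corrections.}

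First, in the odd case, what makes the weighted Poincar\'e step work is that $g=f/\tanh$ is even. This lets you work on $[0,\ell]$, where the bound $\int_0^\ell \tanh^2 g^2\le C\ell^2\int_0^\ell\tanh^2 g'^2$ (for weighted mean zero) holds. Boundedness of $g$ near $0$ is not the reason. On all of $[-\ell,\ell]$, the quadratic vanishing of $\tanh^2$ at $0$ decouples the two halves. For example, a smoothed sign function has $\int\tanh^2 g=0$ and arbitrarily small $\int\tanh^2 g'^2$.

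Second, for the sup bound, $u$ itself satisfies no single Robin condition. Apply the boundary $W^{2,2}$ estimates to $u^+$ and $u^-$ separately; their Robin coefficients are bounded uniformly for $\ell\ge 1$. At the corners, reflect evenly across $z=\pm N$. This gives $\sup|u|\le C(\|u\|_{L^2}+\|\wt{\mc L}u\|_{L^2})$ directly in terms of the $L^2$ norm of the source. That is cleaner than the paper's Schauder-and-radius argument, whose step $\|u\|_{2,\alpha}(p_0)\le C\ol{u}$ tacitly needs H\"older control of $\wt{\mc L}u$.
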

 \begin{definition} \label{Dim1Energy}
For an odd function  $f$ belonging to the space $W^{1, 2} ([-\ell, \ell])$ we set
 \begin{align} \notag
 e^{-}_\ell (f) : = \int_0^\ell f'^2 (s) ds - 2 \int_0^\ell f^2(s) \cosh^{-2} (s) ds - f^2 (\ell) \tanh'(\ell)/\tanh(\ell).
 \end{align}
 \end{definition}

\begin{proposition}[One dimensional weighted Poincare Inequality, odd case] \label{PoincareInequality}
There is a universal constant  $P > 0$ independent of $\ell$ so that: Let $f \in W^{1, 2}([-\ell, \ell])$  be an odd function satisfying the orthogonality condition:
\begin{align} \label{WeightedOrthogonalityCondition}
\int_0^\ell f (s)\tanh(s)/\cosh^2(s) ds = 0.
\end{align}
Then it holds that 
\begin{align} \notag
e^{-}_\ell(f) & \geq P \int_0^\ell f^2(s) \cosh^{-2} (s)ds
\end{align}
\end{proposition}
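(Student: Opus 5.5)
The plan is a ground-state substitution using the odd kernel element $\tanh(s)$. It reduces $e^{-}_\ell$ to a weighted Dirichlet energy with \emph{no} boundary term. After that, only a weighted Poincar\'e inequality with respect to a finite measure remains, and I would prove it by a two-point variance argument with constants independent of $\ell$. I would prove the estimate for $\ell \geq 1$, the regime of interest. For $\ell<1$ the constants below would need to be tracked separately, since $m_\ell$ degenerates as $\ell \to 0$.

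\emph{Step 1 (substitution).} By density, assume $f$ is smooth and odd, so $f(0)=0$. Write $f = \tanh(s)\, g$ on $(0,\ell]$; then $g$ is bounded near $0$, with $g(0)=f'(0)$. Using $\tanh'' + 2\cosh^{-2}\tanh = 0$, expand $(\tanh g)'^2$ and integrate the cross term $2\tanh\tanh' g g' = \tanh\tanh'(g^2)'$ by parts. This gives
\[
\int_0^\ell f'^2 - 2\int_0^\ell \frac{f^2}{\cosh^2} = \int_0^\ell \tanh^2 g'^2\,ds + \tanh(\ell)\tanh'(\ell) g(\ell)^2 ,
\]
since the boundary term at $0$ vanishes. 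As $\tanh(\ell)\tanh'(\ell)g(\ell)^2 = f(\ell)^2\tanh'(\ell)/\tanh(\ell)$, this cancels exactly with the last term of $e^{-}_\ell$. Hence $e^{-}_\ell(f) = \int_0^\ell \tanh^2 g'^2\,ds$.

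\emph{Step 2 (reformulation).} Let $d\mu = \tanh^2(s)\cosh^{-2}(s)\,ds$ on $[0,\ell]$, and let $m_\ell = \mu([0,\ell])$, which is bounded above and below uniformly for $\ell\ge 1$. Condition (\ref{WeightedOrthogonalityCondition}) reads $\int g\,d\mu = 0$, and the right-hand side of the proposition is $P\int g^2\,d\mu$. The mean-zero condition gives the variance identity
\[
\iint (g(s)-g(t))^2\,d\mu(s)\,d\mu(t) = 2 m_\ell \int g^2\,d\mu .
\]

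\emph{Step 3 (two-point estimate).} By Cauchy--Schwarz,
\[
(g(s)-g(t))^2 \le \Big(\int_0^\ell \tanh^2 g'^2\Big)\Big|\int_t^s \coth^2\Big| .
\]
Moreover $|\int_t^s\coth^2| \le |s-t| + 1/\min(s,t)$. Therefore
\[
2m_\ell\int g^2\,d\mu \le K\, e^{-}_\ell(f), \qquad K = \iint_{[0,\infty)^2}\Big(|s-t| + \frac{1}{\min(s,t)}\Big)\,d\mu\,d\mu .
\]

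\emph{Main obstacle.} The key point is showing that $K$ is finite, which makes it independent of $\ell$. Near $0$ the density of $\mu$ is $\sim s^2$, so the $1/\min(s,t)$ singularity is integrable. Near infinity the density decays like $e^{-2s}$, which controls the $|s-t|$ growth. With these facts, $P = 2\inf_{\ell\ge1} m_\ell / K > 0$ works.
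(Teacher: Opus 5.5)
Your argument is correct and takes a genuinely different route from the paper.

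\textbf{The paper's route.} The paper argues softly. For each $\ell$ it minimizes the constrained quotient to get a weighted eigenfunction. It then sends $\ell_k\to\infty$ and extracts a limit solving $\wt{L}\phi=\lambda\cosh^{-2}(s)\phi$ on the half-line. Finally it excludes $\lambda\le 0$ via Lemma \ref{LEigenvalues}, i.e.\ by transplanting the problem to $\mb{S}^2$ through the Gauss map and reading off the spectrum of $\Delta_{\mb{S}^2}+2$. This gives a non-explicit $P$, and as written it only bounds the constrained eigenvalue from below for large $\ell$.

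\textbf{Your route.} Your substitution $f=\tanh(s)\,g$ uses the fact that $\tanh$ both lies in the kernel of $\wt{L}$ and satisfies the Robin condition exactly, so $e^-_\ell(f)=\int_0^\ell \tanh^2 g'^2\,ds$. This identity alone shows that $e^-_\ell\ge 0$ for every $\ell$, with equality only on multiples of $\tanh$. That is precisely the information the paper extracts from the sphere. The remaining step is a mean-zero Poincar\'e inequality for the finite measure $\mu$, which is elementary, and your two-point bound gives an explicit constant. The paper's compactness method is the natural one when no explicit ground state matching the boundary condition is available. Yours buys explicitness, uniformity in $\ell$ by construction, and no spectral geometry.

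\textbf{Two small remarks.}

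First, the density step is unnecessary. Integrate on $[\epsilon,\ell]$. The boundary term at $\epsilon$ is $f(\epsilon)^2\tanh'(\epsilon)/\tanh(\epsilon)\le f(\epsilon)^2/\epsilon\le \int_0^\epsilon f'^2\to 0$. Hence the identity holds for every odd $f\in W^{1,2}$, and you never need to check that the orthogonality constraint survives approximation.

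Second, the restriction $\ell\ge 1$ is only an artifact of enlarging $[0,\ell]^2$ to $[0,\infty)^2$. Set $h(u)=u+u^{-1}$. Then $|s-t|+1/\min(s,t)\le h(s)+h(t)$, so the double integral over $[0,\ell]^2$ is at most $2m_\ell\int_0^\infty h\,d\mu$. The factor $m_\ell$ cancels, and $P=\bigl(\int_0^\infty (s+s^{-1})\tanh^2(s)\cosh^{-2}(s)\,ds\bigr)^{-1}$ works for every $\ell>0$.
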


Before  proving Proposition \ref{PoincareInequality}, we characterize  the bottom spectrum of the operator $\wt{L} = \partial_{ss} + 2 \cosh^{-2}(s)$.

\begin{lemma}[Lower weighted eigenspace for $\wt{L}$]\label{LEigenvalues}
Let $\phi: \mb{R} \rightarrow \mb{R}$ be a solution to the weighted eigenvalue problem for $\wt{L}$ below:
\[
\wt{L} \phi = \lambda \cosh^{-2}(s)\phi.
\]
Assume that 
\[
\int_{-\infty}^\infty \phi^2 \cosh^{-2}(s) ds + \int_{-\infty}^\infty \left(\phi'(s)\right)^2 ds < \infty
\]
and   $\lambda \leq 0$. Then $\lambda = -2$, in which case $\phi$ is  constant, or else $\lambda = 0$ and $\phi$ is a multiple of $\tanh(s)$. 
 \end{lemma}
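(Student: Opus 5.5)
The plan is to change variables so that the weighted problem becomes the Legendre equation on $(-1,1)$, and then read off the admissible eigenvalues from the classical theory.

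\emph{A remark on sign conventions first.} For the statement to match the two functions it names, $\lambda$ must be read as the weighted eigenvalue of $-\wt{L}$, i.e.
\[
-\wt{L}\phi=\lambda\cosh^{-2}(s)\phi .
\]
With this reading a constant gives $-\wt{L}1=-2\cosh^{-2}(s)$, so $\lambda=-2$, and $\wt{L}\tanh=0$ gives $\lambda=0$. With the literal sign in the statement, constants would instead correspond to $\lambda=+2$. I will prove the statement in the $-\wt{L}$ convention; the other convention is the same argument with $\lambda$ replaced by $-\lambda$.

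\emph{Step 1: change of variables.} Put $x=\tanh(s)\in(-1,1)$. Then $\frac{d}{ds}=(1-x^2)\frac{d}{dx}$ and $\cosh^{-2}(s)=1-x^2$. Dividing the equation by $1-x^2$ turns it into
\[
\bigl((1-x^2)\phi_x\bigr)_x+(2+\lambda)\phi=0 .
\]
This is Legendre's equation with $\nu(\nu+1)=2+\lambda$. The finite energy hypothesis translates into
\[
\int_{-1}^1\phi^2\,dx+\int_{-1}^1(1-x^2)\phi_x^2\,dx<\infty .
\]

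\emph{Step 2: regularity at the endpoints.} The points $x=\pm1$ are regular singular points with indicial roots $0,0$. So near $x=1$ every solution has the form $a\,p(x)+b\,\bigl(p(x)\log(1-x)+q(x)\bigr)$, with $p,q$ analytic and $p(1)=1$, and similarly near $x=-1$. On the logarithmic branch $(1-x^2)\phi_x\to -b\,p(1)\cdot 2\neq0$ when $b\neq 0$. Then $(1-x^2)\phi_x^2\sim c/(1-x)$, which is not integrable. Hence finite energy forces $b=0$ at both ends: $\phi$ extends continuously (indeed analytically) to $[-1,1]$, and $(1-x^2)\phi_x\to0$ at $\pm1$.

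\emph{Step 3: identify the eigenvalues.} With the boundary terms vanishing, Green's identity against each Legendre polynomial $P_n$ gives
\[
\bigl(2+\lambda-n(n+1)\bigr)\int_{-1}^1\phi P_n\,dx=0 .
\]
If $2+\lambda\neq n(n+1)$ for every $n$, then $\phi$ is orthogonal to all $P_n$, and completeness of the $P_n$ in $L^2(-1,1)$ gives $\phi=0$. Otherwise $2+\lambda=n(n+1)$, the regular solution space is one-dimensional, and $\phi$ is a multiple of $P_n$. The constraint $\lambda\le0$ means $n(n+1)\le2$, so $n\in\{0,1\}$:
\begin{itemize}
\item $n=0$ gives $\lambda=-2$ and $\phi$ constant;
\item $n=1$ gives $\lambda=0$ and $\phi=cx=c\tanh(s)$.
\end{itemize}

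\emph{Main obstacle.} The delicate step is Step 2: one must rule out, using only the energy hypothesis, the logarithmic (Legendre $Q_\nu$-type) branch at each endpoint separately. This includes $P_\nu$ with $\nu\notin\mathbb{Z}$, which is regular at $x=1$ but logarithmic at $x=-1$. I would handle this directly from the Frobenius expansion as above. An alternative that avoids completeness is to expand $\phi=\sum a_kx^k$ about $x=0$: the recursion $a_{k+2}=\frac{k(k+1)-\nu(\nu+1)}{(k+1)(k+2)}a_k$ gives $a_{k+2}/a_k\to1$, so a non-terminating series behaves like $\log(1-x^2)$ at the endpoints. This is excluded by Step 2, so the series terminates, which again forces $\nu(\nu+1)=n(n+1)$.
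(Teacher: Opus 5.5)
Your proof is correct and is essentially the paper's argument written in coordinates: $x=\tanh(s)$ is the pullback of the height function under the helicoid's Gauss map, and the Legendre equation is the zonal reduction of $\Delta_{\mb{S}^2}+2$. Thus your $P_0$ and $P_1$ are the paper's rotationally invariant degree-$0$ and degree-$1$ spherical harmonics, and your Frobenius argument in Step 2 supplies the removability at the poles that the paper takes for granted. Your sign remark is also right: with the literal equation $\wt{L}\phi=\lambda\cosh^{-2}(s)\phi$, constants give $\lambda=2$ and $P_2(\tanh s)$ gives $\lambda=-4$, so the lemma must be read with $-\wt{L}$, which is the convention the proof of Proposition \ref{PoincareInequality} actually needs since there $\lambda_\ell=\inf e^-_\ell$.
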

 
 \begin{proof}
 The Gauss map $N: \mb{R}^2 \rightarrow \mb{S}^2$ of the immersion $F$ defined in (\ref{HelicoidParametrization}) is (anti) conformal with conformal factor $\cosh^{-2}(s)$. Thus solutions to the weighted eigenvalue problem in the statement of the lemma correspond to rotationally invariant (about the $z$-axis) $W^{1,2 }$ solutions to 
 \[
 \Delta_{\mb{S}^2} \phi + 2 \phi = \lambda \phi
 \]
 on the sphere.  It is well-known that the eigenvalues  of the laplacian on $\mb{S}^2$ is $\lambda_{k} = k(k + 1)$ with multiplicity $2k + 1$. The eigenspace corresponding to $\lambda_0 = 0$ is spanned by constants, while the eigenspace $\lambda_1 = 2$ is spanned by the coordinate functions, $x, y, z$. The functions in these spaces that survive the rotational symmetry condition are $z$ and $1$. Since the pullback $N^* z$  of $z$ under $N$ is $\tanh(s)$, this completes the proof. 
  \end{proof}

\begin{proof}[Proof of Proposition \ref{PoincareInequality}]
Observe first that  $e^{-}_\ell(f)$ is bounded below for each $\ell$ on $L^2$ normalized functions: We have
\[
f^2(\ell) = \left( \int_{0}^\ell f'(s) ds\right)^2 \leq \ell \int_{0}^\ell f'^2(s) ds
\]
and thus, 
\begin{align}\label{ASDF34fr}
e^{-}_\ell(f) & = \int_0^\ell f'^2 (s) ds - 2 \int_0^\ell f^2(s) \cosh^{-2} (s) ds - f^2 (\ell) \tanh'(\ell)/\tanh(\ell) \\ \notag
& \geq \int_0^\ell f'^2 (s) ds - 2 \int_0^\ell f^2(s) \cosh^{-2} (s) ds - \ell \tanh'(\ell)/\tanh(\ell) \int_0^\ell f'^2 (s) ds \\ \notag
& = \left (1 -  \ell \tanh'(\ell)/\tanh(\ell) \right) \int_{0}^\ell f'^2(s) ds- 2 \int_0^\ell f^2(s) \cosh^{-2} (s) ds  \\ \notag
& \geq - 2 \int_0^\ell f^2(s) \cosh^{-2} (s) ds .
\end{align}
Now, for  each fixed $\ell$, we consider a minimizing sequence $f_j$ for $e^{-}_\ell$ with
\[
\int_0^\ell f^2_j \cosh^{-2} (s) ds = 1.
\]
 By  (\ref{ASDF34fr}), the sequence is uniformly bounded in $W^{1, 2}\left([0, \ell]\right)$, and thus, after passing to a subsequence, we can assume that $f_j$  strongly  converges in $L^2$ and weakly in $W^{1, 2}$,  to a limiting function $\phi_{\ell}$ in $W^{1, 2}([0, \ell])$. The function $\phi_\ell$ is then  a smooth solution to
 \begin{align}\label{eigen}
 \wt{L} \phi_\ell= \lambda_{\ell} \cosh^{-2}(s)\phi_{\ell}, \quad \lambda_{\ell} = \inf_{f \in W^{1, 2}[0, \ell]} e^{-}_\ell(f)
 \end{align} 
and  satisfies the orthogonality condition 
\begin{align}\label{orthco}
\int_{0}^\ell \phi_{\ell}(s) \tanh(s)/\cosh^{2}(s) ds = 0.
\end{align}
 We now consider a sequence $\ell_k \rightarrow \infty$ and  set 
 \[
 \lambda : = \liminf_{k \rightarrow \infty} \lambda_{k}
 \]
 where $\lambda_{k} = \lambda_{\ell_k}$. Let $\phi_k$ denote the corresponding lowest eigenfunction $\phi_k$ satisfying  (\ref{eigen}) with $\ell  = \ell_k$, and assume that $\phi_k$ is normalized so that 
 \[
 \int_{0}^{\ell_k} \phi^2_{k}\cosh^{-2}(s) ds = 1. 
 \]
   The uniform energy bound on the sequence $\{ \phi_k\}$ gives that 
\begin{align} \label{enbd}
|\phi_k (s)| \leq C s^{1/2}
\end{align}
and thus the sequence $\phi_k$ converges on compact subsets of $[0, \infty]$ to a smooth limit $f$ satisfying
\[
\wt{\mc{L}} f = \lambda \cosh^{-2}(s) f.
\]
as well as the orthogonality condition (\ref{orthco}) with $\ell = \infty$. Moreover, the uniform  bound (\ref{enbd}) and the dominated convergence therorem gives
\[
\int_{0}^\ell f^{2}(s) \cosh^{-2} (s) ds = 1.
\]
By Lemma \ref{LEigenvalues},  $\lambda > 0$ since $f$ is odd and satisfies  (\ref{orthco}). This completes the proof.
\end{proof}
We now record similar results in the case of even functions. 
\begin{definition} \label{Dim1EnergyEven}
For an even function  $f$ belonging to the space $W^{1, 2} ([-\ell, \ell])$ we set
 \begin{align} \notag
 e^{+}_\ell (f) & : = \int_{0}^\ell f'^2 (s) ds - 2 \int_0^\ell f^2(s) \cosh^{-2} (s) ds - f^2 (\ell) (\cosh^{-1})'(\ell)/\cosh^{-1}(\ell) \\
 & =  \int_{0}^\ell f'^2 (s) ds - 2 \int_0^\ell f^2(s) \cosh^{-2} (s) ds + f^2 (\ell) \tanh(\ell).
 \end{align}
 \end{definition}

\begin{proposition}[One dimensional weighted Poincare Inequality, even case] \label{PoincareInequalityEven}
There is a universal constant  $P > 0$ independent of $\ell$ so that: Let $f \in W^{1, 2}([-\ell, \ell])$  be an even function satisfying the orthogonality condition:
\begin{align} \label{WeightedOrthogonalityConditionEven}
\int_0^\ell f (s)\cosh^{-2}(s) ds = 0.
\end{align}
Then it holds that 
\begin{align} \notag
e^{+}_\ell(f) & \geq P \int_{0}^\ell f^2(s) \cosh^{-2}(s) ds.
\end{align}
\end{proposition}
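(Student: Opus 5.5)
I follow the scheme of the proof of Proposition \ref{PoincareInequality}: show $e^{+}_\ell$ is bounded below on the weighted unit sphere, obtain a minimizer for each $\ell$, and then pass to the limit $\ell \to \infty$, where Lemma \ref{LEigenvalues} excludes nonpositive limiting eigenvalues. The even case has one simplification. In $e^{+}_\ell$ the boundary term is $+f^2(\ell)\tanh(\ell) \geq 0$, so
\[
e^{+}_\ell(f) \geq \int_0^\ell f'^2\,ds - 2\int_0^\ell f^2 \cosh^{-2}(s)\,ds \geq -2\int_0^\ell f^2\cosh^{-2}(s)\,ds .
\]
No analogue of the estimate (\ref{ASDF34fr}) is needed.

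\textbf{Step 1: minimizer for fixed $\ell$.} Minimize $e^{+}_\ell$ over even $f\in W^{1,2}$ with $\int_0^\ell f^2\cosh^{-2} = 1$ and subject to (\ref{WeightedOrthogonalityConditionEven}). The lower bound gives $\int_0^\ell f'^2 \leq \lambda_\ell + 3$ along a minimizing sequence. Since $\cosh^{-2}$ is bounded above and below on $[0,\ell]$, this yields a $W^{1,2}$ bound, so a subsequence converges weakly in $W^{1,2}$ and strongly in $C^0$. The constraint, the orthogonality, and the boundary term all pass to the limit, and the Dirichlet term is weakly lower semicontinuous, so we get a minimizer $\phi_\ell$. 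Its Euler--Lagrange equation is $\wt{L}\phi_\ell = \lambda_\ell \cosh^{-2}\phi_\ell + \mu\cosh^{-2}$, with a multiplier $\mu$ coming from the orthogonality constraint. The natural boundary condition is the Robin condition at $\ell$, and $\phi_\ell'(0)=0$ by evenness.

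\textbf{Step 2: limit as $\ell_k \to \infty$.} Let $\lambda = \liminf \lambda_{\ell_k}$ and suppose for contradiction that $\lambda \leq 0$. The energy bound gives $\int_0^{\ell_k}\phi_k'^2 \leq C$, so $|\phi_k(s)-\phi_k(0)| \leq C s^{1/2}$. The normalization then bounds $|\phi_k(0)|$, since $\cosh^{-2}$ has mass near $0$. Hence $|\phi_k(s)| \leq C(1+s^{1/2})$. The multipliers $\mu_k$ are also bounded: one way is to test the equation against the constant $1$ on $[0,\ell_k]$ and use the boundary condition. By ODE estimates we get convergence on compact sets to an even $f$ with $\wt{L} f = \lambda\cosh^{-2} f + \mu\cosh^{-2}$, with finite energy and with $\int_0^\infty f\cosh^{-2}=0$. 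Dominated convergence, using the $s^{1/2}$ growth against the $\cosh^{-2}$ weight, preserves the normalization $\int_0^\infty f^2\cosh^{-2}=1$.

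\textbf{Main obstacle: identifying the limit.} The key step is to rule out nonpositive $\lambda$ for $f$, and the extra multiplier term is what makes this delicate. I would handle it as follows. Testing the limiting equation against $1$ and integrating by parts (boundary terms vanish because $f'(0)=0$ and, by finite energy, $f'\to 0$ along a sequence) gives $\int 2\cosh^{-2} f = \lambda\int\cosh^{-2}f + \mu\int\cosh^{-2}$. The orthogonality kills both $f$-integrals, so $\mu=0$. If $\lambda<0$ strictly and not $-2$, Lemma \ref{LEigenvalues} gives $f\equiv 0$, which contradicts the normalization. The case $\lambda=-2$ forces $f$ to be constant, and a constant has weighted mean zero only if it vanishes, again a contradiction. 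The case $\lambda=0$ forces $f$ to be a multiple of $\tanh(s)$, which is odd, so $f=0$, a contradiction. Therefore $\liminf \lambda_\ell>0$ for every sequence $\ell_k\to\infty$.

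To finish, note that for each finite $\ell$ we have $\lambda_\ell>0$ by the same argument applied on $[0,\ell]$. Alternatively, one can use continuity of $\lambda_\ell$ in $\ell$ together with the bound for bounded $\ell$, checked via the same limiting argument with $\ell_k\to\ell_0$. Either way, $P=\inf_\ell \lambda_\ell>0$, which proves Proposition \ref{PoincareInequalityEven}.
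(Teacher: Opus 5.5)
Your argument follows the paper's scheme: minimize $e^+_\ell$ at fixed $\ell$, let $\ell_k\to\infty$, and exclude $\lambda\le 0$ via Lemma \ref{LEigenvalues}. For $\ell\to\infty$ you are more careful than the paper in two places. The paper drops the Lagrange multiplier coming from the constraint, and it passes to a normalized limit by appeal to the odd case, where $\phi_k(0)=0$. Your bound $|\phi_k|\le C(1+s^{1/2})$ and your proof that $\mu=0$ in the limit (testing against $1$ on $[0,\infty)$) fill both holes.

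The gap is your last paragraph. The compactness argument only gives $\liminf_{\ell\to\infty}\lambda_\ell>0$, i.e.\ $\lambda_\ell\ge P$ for $\ell\ge\ell_0$.

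\begin{itemize}
\item It does not give $\lambda_\ell>0$ at a finite $\ell$ ``by the same argument applied on $[0,\ell]$''. The computation you use to bound $\mu_k$ (test against $1$, use $\phi'(0)=0$, $\phi'(\ell)=-\tanh(\ell)\phi(\ell)$ and the orthogonality) gives $\mu_\ell=-\phi_\ell(\ell)$ on $[0,\ell]$. This has no reason to vanish.
\item Lemma \ref{LEigenvalues} classifies finite-energy solutions on all of $\mb{R}$ (eigenfunctions on $\mb{S}^2$). It says nothing about the Robin problem on $[0,\ell]$.
\item Your alternative with $\ell_k\to\ell_0$ only yields lower semicontinuity of $\ell\mapsto\lambda_\ell$, not positivity of $\lambda_{\ell_0}$.
\end{itemize}

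The paper's proof has the same limitation: it too only treats $\ell\to\infty$, consistent with its later assumption that $\ell$ is large. Since you assert a uniform $P$ for all $\ell$, you need an extra ingredient.

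A direct one bypasses compactness entirely. Drop the nonnegative boundary term and substitute $t=\tanh s$, $T=\tanh\ell$. Then $\int_0^\ell f'^2ds=\int_0^T(1-t^2)f_t^2dt$ and $\int_0^\ell f^2\cosh^{-2}ds=\int_0^T f^2dt$, and the constraint becomes $\int_0^T f\,dt=0$.

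\begin{itemize}
\item Set $Z(t)=\int_0^t f$, which vanishes at $0$ and $T$.
\item The function $z_0=t(1-t^2)$ is positive on $(0,1)$ and solves $-z_0''=6z_0/(1-t^2)$. The ground-state (Picone) identity for $z_0$ gives $\int_0^T Z'^2\ge 6\int_0^T Z^2/(1-t^2)$.
\item Hence $\int_0^T f^2=-\int_0^T Zf_t\le(\frac16\int_0^T f^2)^{1/2}(\int_0^T(1-t^2)f_t^2)^{1/2}$.
\end{itemize}

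So $\int_0^\ell f'^2ds\ge 6\int_0^\ell f^2\cosh^{-2}ds$, and therefore $e^+_\ell(f)\ge 4\int_0^\ell f^2\cosh^{-2}(s)\,ds$ for every $\ell>0$.
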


\begin{proof}
The proof of the Poincare inequality in the even case is essentially the same as in the odd case:  As before we consider a minimizing sequence $f_j$ for the energy $e^+_{\ell}(-)$. Although in this case, the boundary term  of the energy integral is too large to be absorbed as in the odd case, it appears with the beneficial sign so that, 
\[
e^+(f) = \int_{0}^\ell f'^2 (s) ds - 2 \int_0^\ell f^2(s) \cosh^{-2} (s) ds + f^2 (\ell) \tanh(\ell) \geq - 2 \int_0^\ell f^2(s) \cosh^{-2} (s) ds,
\]
and thus as before $e^+(f)$ is uniformly bounded below. Moreover, since $f_j$ is a minimizing sequence, we have $e^+_\ell(f_j) \leq K$ for a fixed constant $K$ and thus
\[
\int_{0}^\ell f_j'^{2}(s)ds \leq K +  \int_{0}^\ell f_j^2(s)\cosh^{-2}(s)ds -f_j^2 (\ell) \tanh(\ell) \leq K + 2
\]
where again we can discard the boundary term due to the beneficial sign.  Thus, as before the sequence $f_j$ is uniformly bounded in $W^{1, 2}([0, \ell])$ and thus weakly converges to a limit $\phi_\ell$ satisfying the eigenvalue problem.  Arguing as before we take a sequence $\ell_k \rightarrow \infty$ and consider the corresponding sequence of eigenfunctions $\phi_k = \phi_{\ell_k}$. Passing to a subsequence gives smooth convergence  on compact subsets of $[0, \ell]$ to a normalized limit  $\phi$ satisfying
\[
\wt{L} \phi = \lambda \cosh^{-2}(s) \phi, \quad \lambda = \liminf_k \lambda_k.
\]
By Lemma \ref{LEigenvalues}, $\lambda$ must be positive since $\phi$ is even--and thus not $\tanh(s)$--corresponding to $\lambda = 0$--and satisfies the orthogonality condition 
\[
\int_{0}^\ell \phi\cosh^{-2}(s)ds = 0
\]
so that $\phi$ cannot be a constant--corresponding to $\lambda = -2$.
 \end{proof}
 
 \begin{definition}\label{Def:TotalEnergy}
 Given a function $f \in W^{1, 2}([- \ell, \ell])$ we set
 \[
 e_{\ell}(f) = e^{-}_{\ell}(f^-) +   e^{+}_{\ell}(f^+), 
 \]
 where $f^+$ and $f^-$ denote the even and odd parts of $f$, respectively. 
 \end{definition}

 \begin{proposition} \label{PoincareInequalityAgain}
 Let $f \in W^{1, 2}([-\ell, \ell])$ be  a function satisfying the orthogonality conditions
 \[
 \int_{- \ell}^\ell f(s) \tanh(s)\cosh^{-2}(s)ds =  \int_{- \ell}^\ell f(s) \cosh^{-2}(s)ds =0. 
 \]
  Then it holds that
 \begin{align} \notag
 \left( 1 - \frac{2}{2 + P} \right)  \int_{-\ell}^\ell f'^2(s)  \leq 4e_\ell (f).
 \end{align}
 \end{proposition}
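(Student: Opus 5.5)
Split $f$ into its even and odd parts, $f = f^+ + f^-$, and treat each piece with the corresponding one dimensional Poincare inequality.

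\textbf{Orthogonality passes to the parts.} Since $\cosh^{-2}$ is even and $\tanh\cosh^{-2}$ is odd, the two hypotheses give:
\begin{align} \notag
\int_0^\ell f^-(s)\tanh(s)\cosh^{-2}(s)ds = \tfrac12 \int_{-\ell}^\ell f \tanh\cosh^{-2}ds = 0, \quad \int_0^\ell f^+(s)\cosh^{-2}(s)ds = \tfrac12\int_{-\ell}^\ell f\cosh^{-2}ds = 0 .
\end{align}
So Proposition \ref{PoincareInequality} applies to $f^-$ and Proposition \ref{PoincareInequalityEven} applies to $f^+$. This gives $e^{\pm}_\ell(f^\pm) \geq P \int_0^\ell (f^\pm)^2\cosh^{-2}ds$.

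\textbf{Converting Poincare into gradient control.} Write $D^\pm = \int_0^\ell (f^{\pm})'^2 ds$ and $W^\pm = \int_0^\ell (f^\pm)^2\cosh^{-2}ds$.

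In the even case the boundary term $f^2(\ell)\tanh(\ell)$ is nonnegative, so $D^+ - 2W^+ \leq e^+_\ell(f^+)$. Combined with $W^+ \leq e^+_\ell(f^+)/P$ this gives
\begin{align} \notag
D^+ \leq \left(1 + \tfrac{2}{P}\right) e^+_\ell(f^+) = \tfrac{2+P}{P}\, e^+_\ell(f^+).
\end{align}

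In the odd case the boundary term has the bad sign. As in (\ref{ASDF34fr}), it is controlled by $f^2(\ell) \leq \ell D^-$, and the factor satisfies $\ell\tanh'(\ell)/\tanh(\ell) = 2\ell/\sinh(2\ell) \leq 1$. The concern is that the coefficient $1-2\ell/\sinh(2\ell)$ degenerates as $\ell\to 0$. I would avoid this with a convex combination. Fix $t\in(0,1)$ and write
\begin{align} \notag
e^-_\ell = t\,e^-_\ell + (1-t)\,e^-_\ell \geq t\left[c_\ell D^- - 2W^-\right] + (1-t)PW^-, \quad c_\ell = 1-\tfrac{2\ell}{\sinh 2\ell}.
\end{align}
Choosing $t = P/(2+P)$ kills the $W^-$ terms. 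This is where the factor $1 - \frac{2}{2+P} = \frac{P}{2+P}$ in the statement comes from. The result is $\tfrac{P}{2+P}\, c_\ell D^- \leq e^-_\ell(f^-)$.

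To get a bound uniform in $\ell$ I would sharpen the boundary estimate, using that $f^-$ is odd so $f^-(0)=0$. The Sobolev bound $f(\ell)^2 \leq \ell D$ is off by a factor that the constant $4$ in the statement can absorb. For $\ell$ bounded below, $c_\ell \geq c>0$, and I would fold this into the factor $4$, restricting to $\ell$ large as in the applications ($\ell \geq 1$ gives $c_\ell \geq 1 - 2/\sinh 2 > 1/4$). Proving uniformity in $\ell$, or confirming $\ell$ large is implicitly assumed, is the main obstacle. Even and odd pieces are otherwise routine.

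\textbf{Summing.} Since $(f^+)'$ is odd and $(f^-)'$ is even, the cross term integrates to zero. Hence
\begin{align} \notag
\int_{-\ell}^\ell f'^2 = 2(D^+ + D^-).
\end{align}
Adding the two estimates, each with coefficient at least $\tfrac{P}{2+P}\cdot\tfrac14$, and multiplying by $2$ gives $\tfrac{P}{2+P}\int_{-\ell}^\ell f'^2 \leq 4\,(e^+_\ell(f^+)+e^-_\ell(f^-)) = 4e_\ell(f)$, as claimed.
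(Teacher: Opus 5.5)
Your argument is the paper's argument step for step. You split $f$ into even and odd parts and apply Propositions \ref{PoincareInequality} and \ref{PoincareInequalityEven}. You eliminate the weighted $L^2$ terms with the weight $P/(2+P)$; the paper does this by rewriting the Poincar\'e inequality, which is the same combination. You use the favorable sign of the boundary term in the even case, bound the odd boundary term by $f^-(\ell)^2\le \ell D^-$, and sum using $D^++D^-=\frac12\int_{-\ell}^\ell f'^2$.

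The obstacle you flag is not overcome in the paper either. It simply assumes $\ell$ is large enough that $\ell\tanh'(\ell)/\tanh(\ell)<1/2$, so you may take that as a standing hypothesis. Drop your aside about sharpening $f(\ell)^2\le \ell D$ via $f^-(0)=0$. That bound already uses $f^-(0)=0$, and since $c_\ell\to 0$ as $\ell\to 0$, no fixed constant can absorb the loss. A uniform-in-$\ell$ argument would have to feed the orthogonality condition into the boundary estimate, because on short intervals the bound is nearly sharp for $\tanh$, and $e^-_\ell(\tanh)=0$.

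There is a factor-of-two slip in your last step. With $c_\ell\ge 1/4$ you get $\frac{P}{2+P}\cdot\frac14(D^++D^-)\le e_\ell(f)$. Since $D^++D^-=\frac12\int_{-\ell}^\ell f'^2$, this yields $\frac{P}{2+P}\int_{-\ell}^\ell f'^2\le 8e_\ell(f)$, not $4e_\ell(f)$.

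To get the stated constant you need $c_\ell\ge 1/2$, i.e. $2\ell/\sinh(2\ell)\le 1/2$, which is exactly the paper's threshold. Then $\frac{P}{2+P}D^-\le 2e^-_\ell(f^-)$, and $\frac{P}{2+P}D^+\le e^+_\ell(f^+)\le 2e^+_\ell(f^+)$ using $e^+_\ell(f^+)\ge 0$. Summing gives the factor $4$. Note that $\ell\ge 1$ does not suffice, since $2/\sinh 2\approx 0.55$; $\ell\ge 1.1$ does.
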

 \begin{proof}

The inequality in Proposition \ref{PoincareInequality} can be written explicitly as:
\begin{align} \notag
\int_0^\ell (f^{-})'^2 (s)ds - \int_0^\ell (f^-)^2 (2 + P) \cosh^{-2} (s) - (f^-)^2(\ell)  \tanh'(\ell)/ \tanh(\ell) \geq 0.
\end{align}
Equivalently:
\begin{align} \label{epowe1}
\left(1 - \frac{2}{2 + P} \right)\int_0^\ell (f^{-})'^2 (s) ds + \left( \frac{2}{2 + P} -1 \right)(f^-)^2(\ell)  \tanh'(\ell)/ \tanh(\ell) \leq  e^-(f^-).
\end{align}
Since $(f^-)^2(\ell) \leq \ell \int_0^\ell (f^-)'^2(s) ds$ we have
\begin{align} \notag
\left(1 - \frac{2}{2 + P} \right) \left(1 - \ell  \frac{\tanh'(\ell)}{ \tanh(\ell)}\right)\int_0^\ell (f^{-})'^2 (s) ds  \leq  e^-(f^-).
\end{align}
Similarly,  the inequality of  Proposition \ref{PoincareInequalityEven} can be written
\[
\int_0^\ell (f^{+})'^2 (s)ds - \int_0^\ell (f^+)^2 (2 + P) \cosh^{-2} (s) - (f^+)^2(\ell)  (\cosh^{-1})'(\ell)/ \cosh^{-1}(\ell) \geq 0,
\]
giving
\begin{align}\label{epowe}
\left(1 - \frac{2}{2 + P} \right)\int_0^\ell (f^{+})'^2 (s) ds \leq  e^+(f^+).
\end{align}
Summing (\ref{epowe1}) and (\ref{epowe}) and assuming $\ell$ is sufficiently large so that $\ell \frac{\tanh'(\ell)}{\tanh(\ell)} < 1/2$ gives 
\[
\left(1 - \frac{2}{2 + P} \right)\int_0^\ell (f^{+})'^2 (s) ds +  \left( 1 - \frac{2}{2 + P} \right)  \int_{0}^\ell (f^{-})'^2(s)  \leq 2 e_\ell (f).
\]
 It is easy to verify that 
 \[
\int_0^\ell (f^{+})'^2 (s) ds + \int_0^\ell (f^{-})'^2 (s) ds = \frac{1}{2} \int_{-\ell}^{\ell} f'^2 (s)ds,
 \]
 from which the claim directly follows.
 \end{proof}

 \begin{lemma} \label{fOrthogonalPart}
 Let $f(s)$ be a function in  $L^2([-\ell, \ell])$  satisfying the orthogonality conditions
 \[
 \int_{-\ell}^\ell f(s) \tanh(s) ds =  \int_{-\ell}^\ell f(s) \cosh^{-1}(s) ds = 0.
 \]
 Then we can write
 \begin{align} \label{fDecomp}
 f (s) = \alpha \tanh(s) + \gamma \cosh^{-1}(s)  + g (s)
 \end{align}
 where $g$ satisfies the orthogonality  conditions
 \begin{align} \notag
\int_{- \ell}^\ell g(s) \tanh(s) /\cosh^2(s)  = \int_{- \ell}^\ell g(s)\cosh^{-2}(s) = 0 , 
 \end{align}
 as well as the lower bound
 \[
    \int_0^\ell f^2 (s) ds \leq \int_0^\ell g^2 (s) ds.
 \]
 \end{lemma}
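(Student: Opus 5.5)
The plan is to choose $\alpha,\gamma$ explicitly, deduce the orthogonality of $g$ from that choice, and then get the lower bound from a Pythagorean identity, applied one parity class at a time because $\tanh$ is odd and $\cosh^{-1}$ is even. The identity works cleanly on the full interval. The reduction from $[-\ell,\ell]$ to $[0,\ell]$ is the step I expect to be the main obstacle; the last paragraph explains why it only goes through for $f$ of definite parity.

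\emph{Choice of coefficients.} Decompose $f=f^{+}+f^{-}$ into even and odd parts. By parity,
\[
\int_{-\ell}^{\ell}\tanh(s)\cosh^{-1}(s)\cosh^{-2}(s)\,ds=0 .
\]
So the two weighted conditions on $g=f-\alpha\tanh-\gamma\cosh^{-1}$ decouple, and we set
\[
\alpha=\frac{\int_{0}^{\ell} f^{-}\tanh(s)\cosh^{-2}(s)\,ds}{\int_0^\ell \tanh^2(s)\cosh^{-2}(s)\,ds},\qquad
\gamma=\frac{\int_{0}^{\ell} f^{+}\cosh^{-3}(s)\,ds}{\int_0^\ell \cosh^{-4}(s)\,ds}.
\]
Then $g^{-}=f^{-}-\alpha\tanh$ and $g^{+}=f^{+}-\gamma\cosh^{-1}$. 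With this choice, $\int g\tanh\cosh^{-2}=\int g\cosh^{-2}\cdot\cosh^{-1}=0$, the weighted orthogonality obtained by integrating $g$ against the weighted kernel elements $\tanh$ and $\cosh^{-1}$. This is the form used in Proposition~\ref{PoincareInequalityAgain}.

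\emph{Pythagoras on each parity piece.} The hypothesis splits by parity into $\int_0^\ell f^{-}\tanh\,ds=0$ and $\int_0^\ell f^{+}\cosh^{-1}\,ds=0$. On $[0,\ell]$ this gives
\[
\int_0^\ell (g^{-})^2=\int_0^\ell (f^{-})^2-2\alpha\int_0^\ell f^{-}\tanh+\alpha^2\int_0^\ell\tanh^2=\int_0^\ell (f^-)^2+\alpha^2\int_0^\ell \tanh^2 \;\ge\;\int_0^\ell (f^-)^2 ,
\]
and likewise $\int_0^\ell (g^+)^2\ge \int_0^\ell (f^+)^2$. Summing these over $[-\ell,\ell]$, where the cross terms $f^+f^-$ and $g^+g^-$ integrate to zero, gives $\int_{-\ell}^\ell f^2\le\int_{-\ell}^\ell g^2$.

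\emph{The half-interval step.} For $f$ of definite parity we have $f=f^{\pm}$, and the displayed identity is exactly the claimed inequality $\int_0^\ell f^2\le\int_0^\ell g^2$. This is the setting in which the lemma is applied: the energies $e^{\pm}_\ell$ of Definitions~\ref{Dim1Energy} and \ref{Dim1EnergyEven} are defined separately on odd and even parts, and Definition~\ref{Def:TotalEnergy} sums them.

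For $f$ of mixed parity, however, comparing the half-interval integrals leaves the cross term. A direct computation gives
\[
\int_0^\ell g^2-\int_0^\ell f^2=\alpha^2\!\int_0^\ell\!\tanh^2+\gamma^2\!\int_0^\ell\!\cosh^{-2}-2\!\int_0^\ell\!\big(\alpha\tanh f^{+}+\gamma\cosh^{-1}f^{-}\big)+2\alpha\gamma\!\int_0^\ell\!\tanh\cosh^{-1},
\]
which is not sign-definite in general. For example, take $\alpha=0$, $\gamma\neq0$, and $f^-$ with large $\int_0^\ell f^-\cosh^{-1}$ of the appropriate sign; the right-hand side is then negative.

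So the general half-interval statement cannot follow from this decomposition alone. I would first try to absorb the cross term by redefining $\alpha,\gamma$; failing that, I would state and prove the half-interval inequality componentwise, for $f^{+}$ and $f^{-}$ separately, together with the full-interval inequality for general $f$. These are the forms actually needed downstream.
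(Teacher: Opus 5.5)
\textbf{The gap is your choice of $\gamma$.} With $\gamma=\int_0^\ell f^{+}\cosh^{-3}\,ds\big/\int_0^\ell\cosh^{-4}\,ds$ you get $\int_{-\ell}^{\ell} g\cosh^{-3}\,ds=0$, i.e.\ weighted orthogonality of $g$ to $\cosh^{-1}(s)$. The lemma instead asks for $\int_{-\ell}^{\ell} g\cosh^{-2}\,ds=0$, which is weighted orthogonality to constants. Since
\[
\int_{-\ell}^{\ell} g\cosh^{-2}\,ds=\int_{-\ell}^{\ell} f\cosh^{-2}\,ds-\gamma\int_{-\ell}^{\ell}\cosh^{-3}\,ds ,
\]
your $g$ does not satisfy the stated condition in general. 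Your remark that the $\cosh^{-1}$ condition is the form used in Proposition~\ref{PoincareInequalityAgain} is a misreading. That proposition assumes $\int f\cosh^{-2}=0$, which feeds into Proposition~\ref{PoincareInequalityEven}, whose proof uses it to exclude the constant eigenfunction of Lemma~\ref{LEigenvalues}.

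The repair is to take the paper's value $\gamma=\int_{-\ell}^{\ell} f\cosh^{-2}\,ds\big/\int_{-\ell}^{\ell}\cosh^{-3}\,ds$; your $\alpha$ already agrees with the paper's. Nothing else needs to change. Your Pythagorean computation uses only the unweighted hypotheses $\int f\tanh=\int f\cosh^{-1}=0$, so it holds for every $\alpha,\gamma$:
\[
\int_{-\ell}^{\ell} g^2\,ds=\int_{-\ell}^{\ell} f^2\,ds+\int_{-\ell}^{\ell}\bigl(\alpha\tanh(s)+\gamma\cosh^{-1}(s)\bigr)^2ds .
\]
This is the same fact the paper uses when it writes $\int f^2=\int fg$ and applies Cauchy--Schwarz; your version gives an identity rather than an inequality.

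\textbf{On the half-interval issue you are right, and the paper's proof does no better.} Its computation is over $[-\ell,\ell]$. Only the full-interval inequality is used later, as $\int_R f^2\le\int_R g^2$ with $R=[-\ell,\ell]\times[-N,N]$ in the proof of Proposition~\ref{RBCSolutionsIntegralControl}.

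Your fallback of redefining $\alpha,\gamma$ to absorb the cross term is not available. By parity, the two orthogonality conditions form a diagonal linear system for $(\alpha,\gamma)$ with nonzero diagonal entries $\int\tanh^2\cosh^{-2}$ and $\int\cosh^{-3}$, so $\alpha$ and $\gamma$ are uniquely determined. With the corrected $\gamma$, $\alpha$ still depends only on $f^-$ and $\gamma$ only on $f^+$. So your counterexample survives, and the $[0,\ell]$ inequality is genuinely false for mixed-parity $f$.

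The statement to prove is therefore the full-interval inequality. If you want the half-interval versions, add the componentwise inequalities $\int_0^\ell (f^{\pm})^2\le\int_0^\ell (g^{\pm})^2$.
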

 \begin{proof}
Write
\begin{align} \notag
f = \alpha \tanh(s) + \gamma \cosh^{-1}(s) +   (f - \alpha \tanh(s) - \gamma \cosh^{-1}(s))  := \alpha \tanh(s) + \gamma \cosh^{-1}(s) + g
\end{align}
Then choosing $\alpha$ appropriately, it is clear that we can arrange for $g$ to be orthogonal to $\tanh(s)/\cosh^2(s)$ and $\cosh^{-2}(s)$. In particular, we choose $\alpha$ so that 
\begin{align}\label{AlphaExplicit}
\alpha = \frac{\int_{- \ell}^\ell f(s) \tanh(s)/\cosh^2(s) ds}{\int_{- \ell}^\ell \tanh^2(s)/ \cosh^2(s) ds}, \quad  \gamma = \frac{\int_{- \ell}^\ell f(s) \cosh^{-2}(s) ds}{\int_{- \ell}^\ell  \cosh^{-3}(s) ds}.
\end{align}
Then, the orthogonality conditions on $f$ and Cauchy-Schwartz give
\[
\int_{-\ell}^\ell f^2 ds = \int_{-\ell}^\ell f g \leq \left( \int_{-\ell}^\ell f^2 ds\right)^{1/2}  \left( \int_{-\ell}^\ell g^2 ds\right)^{1/2}.
\]
\end{proof}

\begin{lemma} \label{UnweightedPoincare}
 Let $f$ and $g$ be as in the statement of Lemma \ref{fOrthogonalPart}. Then it holds that 
 \begin{align} \notag
 e_\ell(f)  \geq e_\ell (g) \geq \left( 1 - \frac{2}{2 + P}\right)\left( \int_{- \ell}^{\ell} g'^2(s) ds  \right).
\end{align}
\end{lemma}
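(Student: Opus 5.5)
The second inequality is immediate from Proposition \ref{PoincareInequalityAgain}. By Lemma \ref{fOrthogonalPart}, $g$ satisfies exactly the orthogonality conditions $\int_{-\ell}^\ell g\tanh(s)\cosh^{-2}(s)\,ds=\int_{-\ell}^\ell g\cosh^{-2}(s)\,ds=0$ that Proposition \ref{PoincareInequalityAgain} requires. That proposition then bounds $\int_{-\ell}^\ell g'^2$ by a multiple of $e_\ell(g)$; the factor $4$ versus $1$ is only bookkeeping between the full-line and half-line integrals and will be tracked when writing it out. So all the work is in the first inequality, $e_\ell(f)\ge e_\ell(g)$.

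For that, decompose into parity. Since $\tanh$ is odd and $\cosh^{-1}$ is even, the decomposition (\ref{fDecomp}) splits as $f^-=\alpha\tanh+g^-$ and $f^+=\gamma\cosh^{-1}+g^+$. It is therefore enough to show $e^-_\ell(\alpha\tanh+g^-)\ge e^-_\ell(g^-)$ and $e^+_\ell(\gamma\cosh^{-1}+g^+)\ge e^+_\ell(g^+)$. Each $e^\pm_\ell$ is a quadratic form $Q^\pm$ on $[0,\ell]$, so expanding gives
\[
e^-_\ell(f^-)=e^-_\ell(g^-)+2\alpha\,Q^-(\tanh,g^-)+\alpha^2e^-_\ell(\tanh),
\]
and the same with $\gamma\cosh^{-1}$ in the even case. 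Integrating by parts,
\[
Q^-(\phi,h)=-\int_0^\ell h\,\wt L\phi\,ds+h(\ell)\phi'(\ell)-h(\ell)\phi(\ell)\tanh'(\ell)/\tanh(\ell)
\]
(the term at $0$ vanishes because $h$ is odd). For $\phi=\tanh$ we have $\wt L\tanh=0$, and the Robin boundary term is tuned exactly so that $\tanh'(\ell)-\tanh(\ell)\tanh'(\ell)/\tanh(\ell)=0$. Hence $Q^-(\tanh,h)=0$ for every $h$, and in particular $e^-_\ell(\tanh)=0$. The even case works identically: $\wt L\cosh^{-1}=0$, the boundary term of $e^+_\ell$ is $-(\cosh^{-1})'(\ell)/\cosh^{-1}(\ell)$, and the term at $0$ drops since $(\cosh^{-1})'(0)=0$. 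So $Q^+(\cosh^{-1},h)=0$ and $e^+_\ell(\cosh^{-1})=0$.

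It follows that $e^\pm_\ell(f^\pm)=e^\pm_\ell(g^\pm)$, which gives equality and hence the stated inequality. The step that needs the most care is the integration by parts. One has to check that the boundary term in each $e^\pm_\ell$ is exactly the one that annihilates the cross term, including the sign convention in Definition \ref{Dim1Energy} versus the Robin condition in the statement of Theorem \ref{RBCSolutionsWeightedControl}. If a sign mismatch appears there, I would instead aim only for the inequality, using that the leftover $\alpha^2$ and $\gamma^2$ terms have a favorable sign, computing $e^\pm_\ell(\tanh)$ and $e^\pm_\ell(\cosh^{-1})$ explicitly.
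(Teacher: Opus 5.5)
Your odd-part argument is correct and matches the paper: $\wt L\tanh = 0$ together with the Robin term makes $\tanh$ $Q^-$-orthogonal to everything, so $e^-_\ell(f^-) = e^-_\ell(g^-)$.

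The even part, however, rests on a false identity. Since $(\cosh^{-1})'' = \cosh^{-1} - 2\cosh^{-3}$,
\[
\wt L \cosh^{-1}(s) = \cosh^{-1}(s), \qquad \text{not } 0.
\]
It is $\cos(z)\cosh^{-1}(s)$ and $\sin(z)\cosh^{-1}(s)$ that lie in the kernel of $\wt{\mc L}$, with $\partial_{zz}$ supplying the $-1$. Your boundary bookkeeping is actually fine: the terms at $0$ and $\ell$ do cancel. So the same integration by parts gives
\[
Q^+(\cosh^{-1}, h) = -\int_0^\ell h \cosh^{-1}(s)\,ds, \qquad e^+_\ell(\cosh^{-1}) = -\int_0^\ell \cosh^{-2}(s)\,ds < 0 .
\]
Hence $e_\ell(f) = e_\ell(g)$ is false in general. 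Your fallback does not rescue it, because the $\gamma^2$ coefficient has the \emph{unfavorable} sign and the cross term does not vanish.

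The underlying omission is that you never use the hypothesis $\int_{-\ell}^\ell f\cosh^{-1}(s)\,ds = 0$ from Lemma \ref{fOrthogonalPart}. Without it the first inequality fails: $f = \cosh^{-1}$ gives $\alpha = 0$, $\gamma = 1$, $g = 0$, and $e_\ell(f) = -\int_0^\ell \cosh^{-2}(s)\,ds < 0 = e_\ell(g)$.

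The fix, which is what the paper does, is to expand around $f^+$ rather than $g^+$. Write $g^+ = f^+ - \gamma\cosh^{-1}$ and integrate by parts onto $\cosh^{-1}$, which needs no boundary condition on $f^+$:
\begin{align*}
e^+_\ell(g^+) &= e^+_\ell(f^+) - 2\gamma\, Q^+(f^+, \cosh^{-1}) + \gamma^2 e^+_\ell(\cosh^{-1}) \\
&= e^+_\ell(f^+) + 2\gamma \int_0^\ell f^+ \cosh^{-1}(s)\,ds - \gamma^2 \int_0^\ell \cosh^{-2}(s)\,ds .
\end{align*}
The middle term vanishes because $\int_0^\ell f^+\cosh^{-1}\,ds = \frac12\int_{-\ell}^\ell f\cosh^{-1}\,ds = 0$. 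Thus
\[
e^+_\ell(f^+) = e^+_\ell(g^+) + \gamma^2\int_0^\ell\cosh^{-2}(s)\,ds \geq e^+_\ell(g^+),
\]
with strict inequality when $\gamma \neq 0$. This is exactly where the unweighted orthogonality to $\cosh^{-1}$ enters.

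A minor point on the second inequality: the factor $4$ in Proposition \ref{PoincareInequalityAgain} is not bookkeeping that cancels. You lose a $\frac12$ passing from the half-line energies to $\int_{-\ell}^\ell g'^2$, and another in absorbing the odd boundary term for large $\ell$. The stated constant therefore holds only up to a universal factor (e.g.\ after shrinking $P$), which is all the later argument uses.
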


\begin{proof}
We have that
\begin{align} \notag
e^{-}_\ell(f^-) = B^- [f^-, f^-]
\end{align}
for the bilinear form 
\[
B^-[f, g] : = \int_{0}^\ell f' (s)g' (s) ds - 2 \int_{0}^\ell f(s) g (s) \cosh^{-2}(s) ds -  f(\ell) g(\ell)  \tanh'(\ell)/\tanh (\ell).
\]
Since $f^{-} = g^- + \alpha \tanh(s)$, we then have
\begin{align} \notag
e^{-}_\ell(f^-)  &  = B^{-}[ \alpha \tanh(s) + g^{-},  \alpha \tanh(s) + g^{-} ]\\ \notag
& = \alpha^2 B^{-}[ \tanh(s),    \tanh(s) ]  + B[ g^{-},   g^{-} ]  + 2 \alpha B[ g^{-},   \tanh(s) ]  \\ \notag
& =  B[g^{-},  g^{-}  ] \\ \notag
& = e^{-}_\ell (g^-).
\end{align}
where the last equality above follows from the definition of $e^{-}_{\ell}(-)$ and $B^-[ -, - ]$ and where the third line follows from the second since $\tanh(s)$ is in the kernel of $\tilde{L}$ with the boundary conditions  (\ref{SolutionRobinBoundaryCondition}) and (\ref{SolutionNeumanBoundaryCondition}). A similar argument applies to the even part of $f$ using the bilinear form
\[
B^+[f, g] : = \int_{0}^\ell f' (s)g' (s) ds - 2 \int_{0}^\ell f(s) g (s) \cosh^{-2}(s) ds +  f(\ell) g(\ell)  \tanh(\ell).
\]
Since $\wt{L} \cosh^{-1}(s) = \cosh^{-1}(s)$, we have
\begin{align*}
e_\ell^+(g^+) & = B^+[g^+, g^+] \\
& =  B^+[f^+ + \gamma \cosh^{-1}(s), f^+  + \gamma \cosh^{-1}(s)] \\
& =  B^+[f^+ , f^+] + 2 \gamma  B^+[f^+,  \cosh^{-1}(s)]  +  \gamma^2B^+[\cosh^{-1}(s), \cosh^{-1}(s)] \\
& = e_{\ell}^+(f^+)  - 2 \gamma \int_0^\ell f^+ \wt{L} \cosh^{-1}(s) ds - \gamma^2  \int_0^\ell \cosh^{-1}(s) \wt{L} \cosh^{-1}(s) ds \\
&=  e_{\ell}^+(f^+)  - 2 \gamma \int_0^\ell f^+ \cosh^{-1}(s) ds - \gamma^2  \int_0^\ell \cosh^{-2}(s) ds\\
& \leq e_{\ell}^+(f^+) 
\end{align*}
where above we have used the orthogonality of $f$ to $\cosh^{-1}(s)$. This completes the proof. 
\end{proof}

We are now ready to prove Proposition \ref{RBCSolutionsIntegralControl}:

\begin{proof}[Proof of Proposition  \ref{RBCSolutionsIntegralControl}]
In the following, in order to conveniently reference  Proposition \ref{PoincareInequality}, Proposition \ref{PoincareInequalityEven}, Proposition \ref{PoincareInequalityAgain} and Lemma \ref{UnweightedPoincare} while using their notational conventions, we set
\begin{align} \notag
f (s,z) = u(s, z) = \alpha(z) \tanh(s)  + \gamma(z) \cosh^{-1}(s)+ g(s, z),
\end{align}
where the multiples $\alpha$ and $\gamma$ are chosen so that 
\begin{align} \notag
\int_{-\ell}^{ \ell} g(s,z) \tanh(s)/\cosh^2(s) ds = \int_{-\ell}^{ \ell} g(s,z)  \cosh^{-2}(s) ds = 0.
\end{align}
Additionally, we will throughout the proof abbreviate $R: = [-\ell, \ell] \times [-N, N]$.  Let $f^{-}$ and $f^{+}$ denote the odd and even parts $f$, respectively, and similarly for $g$, so 
\[
f^{+} = \gamma \cosh^{-1}(s) + g^{+}, \quad  f^{-} = \gamma \cosh^{-1}(s) + g^{-}.
\]
Since the operator $\wt{\mc{L}}$ preserves odd and even functions, we have
\begin{align*}
- \int_{R} f \wt{\mc{L}} f  & = - \int_{R} (f^+ + f^-) \wt{\mc{L}} (f^+ + f^-)\\
& =   - \int_{R} (f^+) \wt{\mc{L}} (f^+)  - \int_{R} (f^-) \wt{\mc{L}} ( f^-) \\
& = -\int_{R} \left\{f^+\left(f^+\right)_{,zz} + f^+ \wt{L} f^+ \right\}  -\int_{R} \left\{f^-\left(f^-\right)_{,zz} + f^- \wt{L} f^- \right\} \\
& = \int_{R}\left(f^+_{z} \right)^2 + 2 \int_{-N}^N e^{+}_{\ell}(f^+) dz  + 2 \int_{R}\left(f^-_{z} \right)^2 + \int_{-N}^N e^{-}_{\ell}(f^-) dz \\
& \geq 2 \int_{-N}^N e^{+}_{\ell}(f^+) dz + 2 \int_{-N}^N e^{-}_{\ell}(f^-) dz \\
& = 2 \int_{-N}^N e_{\ell}(f) dz.
\end{align*}
Using  Lemma \ref{UnweightedPoincare} and the Cauchy Schwartz inequality  we then have:
\begin{align*}
\int_{R} \left(g_{, s}\right)^2 & = \int_{-N}^N \int_{-\ell}^\ell  \left(g_{, s}\right)^2 ds dz \\
& \leq^C  \int_{-N}^N e_{\ell}(g) dz \\
& \leq^C \int_{-N}^N e_{\ell}(f) dz \\ 
& \leq^C - \int_{R} f \wt{\mc{L}}  f \\
& \leq^C \left( \int_{R} f^2 \right)^{1/2}  \left( \int_{R} \left(\wt{\mc{L}}f\right)^2 \right)^{1/2}. \\
\end{align*}
Applying the flat Poincare inequality for the interval $[0, l]$ and  Lemma \ref{fOrthogonalPart}  we have 
\[
\int_{R} f^2 \leq \int_{R} g^2  \leq \ell^2 \int_{R} \left(g_{, s}\right)^2  \leq^C \ell^2\left( \int_{R} f^2 \right)^{1/2}  \left( \int_{R} \left(\wt{\mc{L}}f\right)^2 \right)^{1/2}.
\]
This establishes the $L^2$ estimates asserted in the proposition.  To establish the uniform estimate on the supremum of $u$, pick a point $p_0 = (s_0, z_0)$ where the supremum of $u$ is achieved and set $\ol{u} = |u(p_0)|$. Let $r$ denote the maximal radius such that $u$ restricted to the ball $B_{r} (p_0)$ is at least $\frac{\ol{u}}{2}$. Schauder theory (see Remark \ref{Rem:SchauderTheory}) then gives the bound $\| u\|_{2, \alpha} (p_0) \leq C \ol{u}$  on the localized holder norms, which then gives a uniform lower bound  on $r$. To see this, pick $p \in B_{r} (p_0)$ such that $|u_N(p)| = \ol{u}/2$. Then 
\[
\frac{\ol{u}}{2} = \left|u (p_0) - u (p) \right| \leq \sup_{B_r(p_0)} \left| \nabla u \right||p- p_0| \leq^C r \ol{u}.
\]
We then have 
\[ 
 \ell^4 \left(\left\|\wt{\mc{L}} u \right\|_{L^2}\right)^2  \geq \int_{R} f^2 \geq \int_{B_r(p_0)  \cap R} f^2 \geq c r^2 \ol{u}^2 \geq c \ol{u}^2,
\]
or 
\[
\sup_{R} |u| \leq^C \ell^{2}\left\|\wt{\mc{L}} u \right\|_{L^2}.
\]
This completes the proof. 
\end{proof}

\begin{remark}\label{Rem:SchauderTheory}
Regarding the application of Schauder Theory  in the proof of Proposition \ref{RBCSolutionsIntegralControl}, the relevant results can be found in \cite{GT}. For points $p_0$ away from the boundary, we can apply interior apriori estimates (Theorem 6.2 on page 90). For points $p_0$ near the boundary, we apply the interior estimates on the half ball for the oblique derivative problem (Lemma 6.29 on page 126).
\end{remark}

\section{Solvability of the Poisson problem for strongly orthogonal source terms} \label{Sec:Solvability}
 
  We begin by characterizing the kernel of the stability operator on the rectangles $[-\ell, \ell] \times [-N, N]$ with the imposed boundary conditions. 
 
  \begin{corollary} \label{WhatsInTheRobinKernel}
 Let $\phi: [-\ell, \ell] \times [-N, N] \rightarrow \mb{R}$  be an odd function satisfying:
 \begin{enumerate} 
 \item $\tilde{\mathcal{L}} \phi = 0$. 
\item $\partial_s\phi (\ell, z) \tanh(\ell) - \phi(\ell, z)/\cosh^2(\ell) = 0$.

\item $\partial_z \phi (s, \pm N) = 0 $.
 \end{enumerate}
 Then $\phi$ is a multiple of $\tanh(s)$.
 \end{corollary}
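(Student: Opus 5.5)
Decompose $\phi(\cdot,z)$ at each height $z$ along the $\tanh(s)$ direction, show the remainder has a nonnegative energy that must vanish, and then show the $\tanh$ coefficient is constant.

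\textbf{Step 1: decomposition and energy identity.} For each $z$ write
\[
\phi(s,z)=\alpha(z)\tanh(s)+g(s,z),\qquad \int_{-\ell}^{\ell} g(s,z)\tanh(s)\cosh^{-2}(s)\,ds=0.
\]
Since $\phi$ is odd, $g$ is odd. Multiply $\wt{\mc{L}}\phi=0$ by $\phi$ and integrate over $R=[-\ell,\ell]\times[-N,N]$. Integrate by parts in $z$; the boundary terms vanish by the Neumann condition (3). Integrate by parts in $s$ on $[0,\ell]$ and on $[-\ell,0]$; by the Robin condition (2), the boundary term at $s=\pm\ell$ equals $\phi^2(\ell,z)\tanh'(\ell)/\tanh(\ell)$. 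This gives
\[
0=-\int_R\phi\,\wt{\mc{L}}\phi=\int_R\phi_{,z}^2+2\int_{-N}^{N}e^-_\ell(\phi(\cdot,z))\,dz .
\]

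\textbf{Step 2: reduce to $g$ and apply the odd Poincaré inequality.} As in the first half of the proof of Lemma \ref{UnweightedPoincare}, the bilinear-form computation with $B^-$ gives $e^-_\ell(\phi)=e^-_\ell(g)$. This uses that $\tanh$ satisfies $\wt{L}\tanh=0$ together with the Robin condition, so $B^-[\tanh,h]=0$ for every odd $h$. Then Proposition \ref{PoincareInequality} gives
\[
e^-_\ell(g)\ge P\int_0^\ell g^2\cosh^{-2}(s)\,ds\ge 0 .
\]
Both terms in the identity of Step 1 are therefore nonnegative and sum to zero, so
\[
\phi_{,z}\equiv 0,\qquad \int_0^\ell g^2\cosh^{-2}(s)\,ds=0 \ \text{ for a.e. } z .
\]
Since $\cosh^{-2}>0$, this forces $g\equiv 0$. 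Hence $\phi=\alpha(z)\tanh(s)$ with $\alpha'(z)=0$, so $\alpha$ is constant.

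\textbf{Main obstacle: the identity $B^-[\tanh,h]=0$.} Integrating by parts,
\[
B^-[\tanh,h]=h(\ell)\tanh'(\ell)-\int_0^\ell h\,\wt{L}\tanh\,ds-h(\ell)\tanh(\ell)\frac{\tanh'(\ell)}{\tanh(\ell)}=0 .
\]
I would verify this carefully, since it is the only place the specific Robin coefficient enters. Apart from that, the argument only needs enough regularity of $\phi$ up to the boundary to justify the integrations by parts. This follows from the oblique-derivative Schauder theory cited in Remark \ref{Rem:SchauderTheory}.
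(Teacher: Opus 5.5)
Your argument is correct, but it takes a different route from the paper's.

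\textbf{The paper's route.} It first uses $\wt{L}\tanh=0$ and the Robin condition to show that the unweighted projection $\int_0^\ell \phi\tanh(s)\,ds$ is affine in $z$, hence constant by the Neumann condition. It subtracts that constant multiple of $\tanh(s)$, so the remainder satisfies the strong (unweighted) orthogonality condition. It then invokes the a priori bound of Proposition \ref{RBCSolutionsIntegralControl} with $E=0$ to conclude the remainder vanishes.

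\textbf{Your route.} You run a direct rigidity argument instead. You keep the $\int_R \phi_{,z}^2$ term, which the proof of Proposition \ref{RBCSolutionsIntegralControl} discards. You project onto $\tanh$ with the weighted inner product and a $z$-dependent coefficient $\alpha(z)$. You then need only two ingredients:
\begin{enumerate}
\item the identity $B^-[\tanh,h]=0$ for odd $h$, which you verify correctly (the boundary term at $s=0$ drops because $h(0)=0$);
\item strict positivity in the odd weighted Poincar\'e inequality, Proposition \ref{PoincareInequality}.
\end{enumerate}
Constancy of the $\tanh$ coefficient then comes for free from $\phi_{,z}\equiv 0$, so you never need the ODE for $\int_0^\ell\phi\tanh(s)\,ds$.

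\textbf{What each route buys.} Yours is more economical and self-contained. It bypasses Lemma \ref{fOrthogonalPart}, the flat Poincar\'e step, and the Schauder sup-norm argument. It also bypasses Proposition \ref{PoincareInequalityAgain}, whose proof assumes $\ell$ large enough that $\ell\tanh'(\ell)/\tanh(\ell)<1/2$. It uses only positivity of the Poincar\'e constant at the given $\ell$, not uniformity in $\ell$. The paper's route buys structural economy: kernel triviality becomes an immediate consequence of the quantitative estimate it has already proved and reuses for existence in Proposition \ref{StripInvertibilityWithRBC}.
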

 \begin{proof}
 For $\phi$ as in the hypothesis, we have
 \begin{align*}
\left( \int_{0}^\ell \phi \tanh(s) ds\right)''  & = \int_{0}^\ell \phi_{, z z} \tanh(s) ds \\
 & = -\int_{0}^\ell \wt{L} \phi \tanh(s) ds \\
 & = -\int_{0}^\ell  \phi  \wt{L}\tanh(s) ds - \left. \phi' \tanh(s) \right|_{0}^\ell + \left. \phi \tanh'(s) \right|_{0}^\ell\\
 & = 0.
 \end{align*}
 where the last line follows from the fact that $\wt{L} \tanh(s) = 0$ and the imposed boundary values on $\phi$. Thus the integral $\int_{0}^\ell \phi \tanh(s) ds$ is affine and the imposed boundary conditions at $z = \pm N$ give that is constant. Thus, there is a constant $\alpha$ such that $\wt{\phi} = \phi - \alpha \tanh(s)$ satisfies $\wt{L} \wt{\phi} = 0$ and
 \[
\int_{0}^\ell \phi \tanh(s) ds= 0, \quad \forall z.
 \]
 Applying  Proposition \ref{RBCSolutionsIntegralControl} with $E$  = 0 then gives $\wt{\phi} \equiv 0$ and gives the claim. 
 \end{proof}
 
 \begin{corollary} \label{WhatsInTheRobinKernelAgain}
 Let $\phi: [-\ell, \ell] \times [-N, N] \rightarrow \mb{R}$  be an even function satisfying:
 \begin{enumerate} 
 \item $\tilde{\mathcal{L}} \phi = 0$. 
\item $\partial_s\phi (\ell, z) \cosh^{-1}(\ell) - \phi(\ell, z)\partial_s\left(\cosh^{-1}\right)(\ell) = 0$.
\item $\partial_z \phi (s, \pm N) = 0 $.
 \end{enumerate}
 Then $\phi$ is in the span of $\sin(z) \cosh^{-1}(s)$ and $\cos(z) \cosh^{-1}(s)$.
 \end{corollary}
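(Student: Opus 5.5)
The plan mirrors the proof of Corollary \ref{WhatsInTheRobinKernel}, with $\cosh^{-1}(s)$ playing the role of $\tanh(s)$. The idea is to project $\phi$ onto $\cosh^{-1}(s)$ in the $s$ variable, show this projection solves a harmonic-oscillator ODE in $z$, subtract the corresponding multiple of $\cosh^{-1}(s)$, and kill the remainder with Proposition \ref{RBCSolutionsIntegralControl} applied with zero source term.

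\textbf{Step 1: the ODE for the projection.} Set $\gamma(z) := \int_0^\ell \phi(s,z)\cosh^{-1}(s)\,ds$. Differentiating twice in $z$ and using $\phi_{,zz} = -\wt{L}\phi$ (from $\wt{\mc{L}}\phi = 0$) gives
\[
\gamma''(z) = -\int_0^\ell (\wt{L}\phi)\cosh^{-1}(s)\,ds .
\]
Integrating by parts twice in $s$ turns this into $-\int_0^\ell \phi\,\wt{L}\cosh^{-1}(s)\,ds$ plus boundary terms at $s=0$ and $s=\ell$.
\begin{itemize}
\item At $s=\ell$, the boundary term is $-\bigl(\phi_{,s}\cosh^{-1} - \phi\,(\cosh^{-1})'\bigr)(\ell)$, which vanishes by the Robin condition (2).
\item At $s=0$, evenness of $\phi$ gives $\phi_{,s}(0,z)=0$, and $(\cosh^{-1})'(0)=0$, so this term vanishes as well.
\end{itemize}
A direct computation shows $(\cosh^{-1})'' = \cosh^{-1} - 2\cosh^{-3}$, so $\wt{L}\cosh^{-1}(s) = \cosh^{-1}(s)$. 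Hence $\gamma'' = -\gamma$, and therefore $\gamma(z) = a\cos z + b\sin z$. The Neumann condition (3) gives $\gamma'(\pm N) = 0$. This may force $a=b=0$ for generic $N$, but we do not need that.

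\textbf{Step 2: subtract the projection.} Put $c(z) := \gamma(z)\big/\int_0^\ell \cosh^{-2}(s)\,ds$, a combination of $\cos z$ and $\sin z$, and set $\psi := \phi - c(z)\cosh^{-1}(s)$. We check that $\psi$ satisfies every hypothesis of Proposition \ref{RBCSolutionsIntegralControl}.
\begin{itemize}
\item $\wt{\mc{L}}\bigl(c\cosh^{-1}\bigr) = (c'' + c)\cosh^{-1} = 0$, so $\wt{\mc{L}}\psi = 0$.
\item $\cosh^{-1}(s)$ satisfies the even Robin condition trivially, so $\psi$ inherits condition (2).
\item $c'(\pm N) = \gamma'(\pm N)/\int_0^\ell\cosh^{-2} = 0$, so $\psi$ inherits the Neumann condition.
\item $\psi$ is even, so $\int_{-\ell}^\ell \psi\tanh(s)\,ds = 0$ automatically and its odd part is zero.
\item By construction $\int_0^\ell \psi\cosh^{-1}(s)\,ds = 0$, and by evenness the same holds over $[-\ell,\ell]$.
\end{itemize}
Both orthogonality conditions of the Proposition therefore hold.

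\textbf{Step 3: conclude.} Applying Proposition \ref{RBCSolutionsIntegralControl} with $\wt{\mc{L}}\psi = 0$ gives $\|\psi\|_{L^2} = 0$, so $\psi \equiv 0$. Hence $\phi = c(z)\cosh^{-1}(s)$ lies in the span of $\cos(z)\cosh^{-1}(s)$ and $\sin(z)\cosh^{-1}(s)$.

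\textbf{Main obstacle.} The step needing the most care is the integration by parts in Step 1. One must check that the Robin condition is exactly the Wronskian-type combination $\phi_{,s}\cosh^{-1} - \phi\,(\cosh^{-1})'$ that appears, and that the interior boundary term at $s=0$ vanishes by evenness. One must also read the orthogonality hypothesis of Proposition \ref{RBCSolutionsIntegralControl} as the vanishing of both integrals, which is what its proof actually uses via Lemma \ref{fOrthogonalPart}.
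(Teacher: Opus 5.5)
Your proposal is correct and is essentially the paper's argument: project $\phi$ onto $\cosh^{-1}(s)$, show the projection satisfies $d''=-d$, subtract the matching combination of $\cos(z)\cosh^{-1}(s)$ and $\sin(z)\cosh^{-1}(s)$, and kill the remainder with Proposition \ref{RBCSolutionsIntegralControl} at zero source term. You are more careful than the paper on three points: you check the boundary terms at $s=0$ and $s=\ell$, you check that the subtracted term preserves the Neumann condition at $z=\pm N$, and you read the orthogonality hypothesis as the vanishing of both integrals.
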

\begin{proof}
Consider $\phi$ as in the hypothesis and set 
\[
d = d(z) = \int_{0}^{\ell} \phi(s, z) \cosh^{-1} (s) ds.
\]
Then 
\begin{align*}
d'' & =  \left(\int_{0}^{\ell} \phi(s, z) \cosh^{-1} (s) ds \right)'' \\
& = \int_{0}^{\ell} \phi_{, zz}(s, z) \cosh^{-1} (s) ds \\
& = - \int_{0}^\ell \wt{L}\phi \cosh^{-1}(s) ds \\
& = -  \int_{0}^\ell\phi  \wt{L}\cosh^{-1}(s) ds \\
& = -  \int_{0}^\ell\phi \cosh^{-1}(s) ds \\
& = - d.
\end{align*}
Thus after adding a linear combination of $\sin(z) \cosh^{-1}(s)$ and $\cos(z) \cosh^{-1}(z)$ we obtain a function $\wt{\phi}$ satisfying $\wt{L} \wt{\phi} = 0$ and 
\[
 \int_{0}^{\ell} \wt{\phi}(s, z) \cosh^{-1} (s) ds = 0.
\]
Applying  Proposition \ref{RBCSolutionsIntegralControl} with $E = 0$ gives  that  $\sup \wt{\phi}  = 0$, from which it follows that $\phi$ is in the span of $\sin(z) \cosh^{-1}(s)$ and $\cos(z) \cosh^{-1}(z)$.
\end{proof}

  \begin{proposition} \label{StripInvertibilityWithRBC}
Let $E(s, z)$ be a class $C^{0, \alpha}$ function supported on $[-\ell, \ell] \times [- 2\pi, 2 \pi]$ and satisfying
\begin{align} \notag
\int_0^\ell E (s , z) \tanh(s) ds =  \int_0^\ell E (s , z) \cosh^{-1}(s) ds= 0.
\end{align}
Then, given $N > 2 \pi$ there is locally $C^{2, \alpha}$ function $u_N(s, z) : [- \ell, \ell] \times [-N, N] \rightarrow \mb{R}$ satisfying (\ref{SolutionRobinBoundaryCondition}),  (\ref{SolutionRobinBoundaryCondition2}) and  (\ref{SolutionNeumanBoundaryCondition}) and (\ref{SolutionStrongOrthogonality}) in the statement of Proposition \ref{RBCSolutionsIntegralControl} as well as the bounds
\[
\sup \left| u_N\right|, \quad \left\| u_N: L^2\right\| \leq^C \ell^{5/2} \left( \sup\| E\|_{0, \alpha} \right).
\]
 \end{proposition}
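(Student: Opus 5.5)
The plan is to solve the problem separately for the odd and even parts, using Fredholm theory on the rectangle $R=[-\ell,\ell]\times[-N,N]$, and then to read off the bounds from Proposition \ref{RBCSolutionsIntegralControl}. Since $\wt{\mc{L}}$ commutes with $s\mapsto -s$, split $E=E^++E^-$. The orthogonality hypotheses mean exactly that $E^-$ is $L^2$-orthogonal, slice by slice, to $\tanh(s)$, and $E^+$ to $\cosh^{-1}(s)$.

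\textbf{Odd part.} On odd functions, consider $\wt{\mc{L}}$ with the Robin condition (\ref{SolutionRobinBoundaryCondition}) at $s=\ell$ and the Neumann condition at $z=\pm N$.
\begin{enumerate}
\item This is a self-adjoint elliptic problem with an oblique (Robin) boundary condition, so the Fredholm alternative holds. Existence in $W^{1,2}$ comes from the bilinear form $\int_R u_z v_z+2\int B^-[u,v]\,dz$, followed by Schauder regularity (Remark \ref{Rem:SchauderTheory}; corners handled by reflection in $z$ across $z=\pm N$, which the Neumann condition permits).
\item By Corollary \ref{WhatsInTheRobinKernel}, the kernel is spanned by $\tanh(s)$. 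Since $E^-\perp\tanh(s)$ on each slice, hence on $R$, $E^-$ lies in the range. This gives a solution $u^-$, unique modulo $\tanh(s)$.
\item As in the proof of Corollary \ref{WhatsInTheRobinKernel}, set $a(z)=\int_0^\ell u^-\tanh(s)\,ds$. Integration by parts with the Robin condition gives $a''=-\int_0^\ell E^-\tanh(s)\,ds=0$, and the Neumann condition gives $a'(\pm N)=0$, so $a$ is constant. Subtracting $(a/\int_0^\ell\tanh^2)\tanh(s)$ makes $u^-$ satisfy (\ref{SolutionStrongOrthogonality}).
\end{enumerate}

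\textbf{Even part.} Proceed the same way with the even Robin condition (\ref{SolutionRobinBoundaryCondition2}).
\begin{enumerate}
\item By Corollary \ref{WhatsInTheRobinKernelAgain}, the kernel lies in the span of $\cos(z)\cosh^{-1}(s)$ and $\sin(z)\cosh^{-1}(s)$, restricted to those members meeting the Neumann condition (nontrivial only for special $N$). Orthogonality of $E^+$ to it holds slicewise, so $u^+$ exists.
\item Set $d(z)=\int_0^\ell u^+\cosh^{-1}(s)\,ds$. Then $d''+d=-\int_0^\ell E^+\cosh^{-1}(s)\,ds=0$ with $d'(\pm N)=0$.
\item Subtracting the appropriate kernel combination when it exists kills $d$. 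If $N$ is non-resonant, the Neumann condition already forces $d\equiv 0$. Thus $u^+$ satisfies (\ref{SolutionStrongOrthogonality}).
\end{enumerate}
Set $u_N=u^++u^-$. A side remark: one could instead work in the orthogonal subspace directly via the coercivity of Proposition \ref{PoincareInequalityAgain} and Lemma \ref{UnweightedPoincare}, minimizing $\int_R(u_z^2-uE)+2\int e_\ell(u)\,dz$ over functions satisfying (\ref{SolutionStrongOrthogonality}). By Lax--Milgram this gives existence directly, and the constraint multipliers vanish by the computation above. I would use whichever route is cleaner, probably this variational one.

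\textbf{Bounds.} Apply Proposition \ref{RBCSolutionsIntegralControl} to get $\sup|u_N|,\ \|u_N\|_{L^2}\le C\ell^2\|E\|_{L^2}$. Since $E$ is supported on a region of area $2\ell\cdot4\pi$, we have $\|E\|_{L^2}\le C\ell^{1/2}\sup|E|\le C\ell^{1/2}\sup\|E\|_{0,\alpha}$, which yields the factor $\ell^{5/2}$.

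The main obstacle is the rigorous justification of the Fredholm/variational step, namely showing that the energy is coercive on the constrained space independently of $N$, and obtaining regularity up to the corners. Coercivity in $s$ comes from Lemma \ref{UnweightedPoincare} together with the flat Poincaré inequality; the $z$-derivative term is nonnegative.
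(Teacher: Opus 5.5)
Your proposal is correct and follows essentially the paper's route: existence from standard elliptic theory for the odd and even parts, recovery of the slicewise orthogonality from $a''=0$ for $a(z)=\int_0^\ell u\tanh(s)\,ds$ and $d''+d=0$ for $d(z)=\int_0^\ell u\cosh^{-1}(s)\,ds$ (subtracting kernel elements where needed), and the bound from Proposition \ref{RBCSolutionsIntegralControl} combined with $\|E\|_{L^2}\le C\ell^{1/2}\sup|E|$. Your check that $d'(\pm N)=0$ forces $d\equiv 0$ unless $\sin N$ or $\cos N$ vanishes, and that in those cases the subtracted combination does satisfy the Neumann condition, is a point the paper glosses over; the sign slips in your formulas for $a''$, $d''+d$ and in the $uE$ term of the variational functional are harmless, since the relevant integrals vanish or can be fixed by rescaling $E$.
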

 \begin{proof} 
The existence of weak solutions and their higher regularity  follows from standard theory (see \cite{GT}  Theorem 6.30 for regularity in the setting of the oblique derivative problem, and \cite{Ev}, Chapter 6 for existence of weak solutions), and we need only  show that the solutions $u_N$ recover the strong orthogonality conditions (\ref{StrongOrthogonality}). First consider the case that $u_N$ is odd. Then  strong orthogonality  to $\cosh^{-1}(s)$  is trivially satisfied. For the strong orthogonality to $\tanh(s)$, we have:
 \begin{align} \label{SomeStuff}
 \partial^2_{z z}u_N + \partial^2_{s s} u_N + 2 \cosh^{-2} (s) u_N =  \partial^2_{z z}u_N + \tilde{L} u_N = E,
 \end{align}
 where the operator $\wt{L}$ is defined implicitly above. Multiplying both sides by $\tanh(s)$, integrating by parts and using the Robin boundary condition then gives
 \begin{align} \notag
 \int_{-\ell}^\ell  \left(\partial^2_{z z}u_N\right)(s, z) \tanh(s) ds =    \partial^2_{z z}\left(\int_{-\ell}^\ell  u_N(s, z) \tanh(s) ds \right)= 0.
 \end{align}
 The boundary conditions  then immediately imply $\left(\int_0^\ell  u_N(s, z) \tanh (s)ds \right) $ is constant in $z$. The conclusion then follows by adding a multiple of $\tanh(s)$ to $u_N$ if necessary. 
 
 The case that $u_N$ is even is similar: The first orthogonality condition is again trivially satisfied since $\tanh(s)$ is odd. Multiplying  (\ref{SomeStuff}) by $\cosh^{-1}(s)$, using the imposed boundary conditions on the even part gives
  \begin{align} \notag
   \partial^2_{z z}\left(\int_{-\ell}^\ell  u_N(s, z) \cosh^{-1}(s) ds \right)  + \int_{-\ell}^{\ell} u_N \cosh^{-1}(s) ds= 0.
 \end{align}
 Thus, we have
 \[
  \int_{-\ell}^{\ell} u_N \cosh^{-1}(s) ds = a \sin(z) + b \cos(z)
 \]
 for constants $a$ and $b$. Again, modifying $u_N$ by adding multiples of the $\sin(z)\cosh^{-1}(s)$ and $\cos(z)\cosh^{-1}(s)$, which lie in the kernel of $\wt{\mc{L}}$ we have that 
 \[
   \int_{-\ell}^{\ell} u(s, z) \cosh^{-1}(s)  ds = 0, \quad \forall z
 \]
 We can now apply Proposition \ref{RBCSolutionsIntegralControl} to get
 \begin{align*}
 \sup \left|u_N \right|, \quad \left\| u_N\right\|_{L^2} & \leq^C \ell^2 \left\|\wt{\mc{L}} u \right\|_{L^2}  \\
 & \leq \ell^2 \left\| E\right\|_{L^2}  \\
 & \leq \ell^{5/2} \left(\sup\left\|  E\right\|_{0, \alpha}\right)
 \end{align*}
 where the last inequality follows from the fact that  $E$ is supported on $[- \ell,  \ell] \times [- 2\pi , 2\pi]$.
 \end{proof}

\section{Proof of Proposition \ref{RBCSolutionsWeightedControl} } \label{Sec:ProofOfMainTheorem}

 \begin{proof}[Proof of Proposition \ref{RBCSolutionsWeightedControl}]
 Set
 \[
 u = u_{\infty} : = \lim_{N \rightarrow \infty} u_N
 \]
 where $u_N$ is as in the statement of Proposition \ref{StripInvertibilityWithRBC}. Since the supremum of the functions $u_N$ are uniformly locally bounded, they are uniformly locally bounded in $C^{2, \alpha}$ by Schauder theory and thus converge to the $C^{2, \alpha}$ limit $u$ in $C^{2, \alpha/2}$. Thus, $u$ solves the Poisson problem for $\wt{\mc{L}}$ with source term $E$ and satisfies the strong orthogonality conditions (\ref{StrongOrthogonality}). Setting $\left(\sup\| E\|_{0, \alpha} \right) : =  \beta$ we have the  estimates
\[
\sup \left| u\right|, \quad \left\| u: L^2\right\| \leq^C \ell^{5/2} \beta.
\]
For $z > 2\pi$ (away from the support of $E$) that 
\begin{align*}
\frac{1}{2}\left(\int_{0}^\ell u^2 d s \right)_{, zz} &=  \int_{0}^\ell u_{, z}^2 d s + \int_{0}^\ell u_{, zz} u ds \\
& \geq \int_{0}^\ell u \left( -u_{, ss} - 2 \cosh(s) u \right) \\
& = e_{\ell} (u(-, z)) \geq 0.
\end{align*}
where the last line above follows from Lemma \ref{UnweightedPoincare}. Thus the integral $\int_{0}^\ell u^2 ds$ is convex for $|z| > 2 \pi$. The integral bound for the solution $u$   then implies monotonically decreasing in $z$ for $z > 2\pi$. Again, the integral bound Proposition \ref{RBCSolutionsIntegralControl} implies the uniform decay rate of at least
\begin{align}\label{IntMon}
\int_0^{\ell} u^2 ds \leq^C  \beta^2 \ell^{5}\frac{1}{1 + |z|}.
\end{align}
 Observe now that the  $C^0$ bound for the solution $u$ and Schauder Theory (See Remark \ref{Rem:SchauderTheory} again) imply uniform local $C^{2, \alpha}$ norms 
\begin{align}\label{BootStrap1}
\left\| u\right\|_{2, \alpha} (p) \leq^C \ell^{5/2} \beta.
\end{align} 
We can then use the integral bound (\ref{IntMon}) to bootstrap a uniform pointwise decay estimate for  $u_{\infty}$: Let $\ol{u} (z)$ denote the supremum of $\left|u(s,z)\right|$  in $s \in [-\ell, \ell]$ and assume that 
\begin{align} \label{BootStrap3}
\ol{u}(z) \leq^C \ell^{5/2} \beta \left(\frac{1}{1  + |z|}\right)^{p}.
\end{align}
for some $p > 0$. Observe that by (\ref{BootStrap1}), (\ref{BootStrap3}) holds with $p = 0$. Pick $\overline{s}$ and $\underline{s}$ such that $\ol{u}(z) = \left|u_{\infty}(\overline{s}, z)\right|$ and such that  $\ol{u}(z)/2 = \left|u_{\infty}(\underline{s}, z)\right|$. With $d =  |\overline{s} - \underline{s}|$  we have
\begin{align}\label{Bootstrap2}
\frac{\ol{u}}{2} & = \left|\int_{\underline{s}}^{\overline{s}} \left| \frac{\partial u_{\infty}}{\partial s} \right|(s, z) ds \right| \\
& \leq C \ell^{5/2} \beta \left(\frac{1}{1  + |z|} \right)^p  d\notag
\end{align}
We have
\begin{align*}
\beta^3\ell^{15/2}\left(\frac{1}{1  + |z|}\right)^{1 + p} &\geq c \beta \ell^{5/2} \left(\frac{1}{1  + |z|} \right)^p \int_0^\ell u^2 (s, z) ds \\
& \geq \beta \ell^{5/2} \left(\frac{1}{1  + |z|} \right)^p \int_{\underline{s}}^{\overline{s}}  u^2 (s, z) \\
& \geq \beta \ell^{5/2} \left(\frac{1}{1  + |z|} \right)^p \frac{\ol{u}^2}{4} d\\
& \geq c \ol{u}^3
\end{align*}
from which it follows that $\ol{u} \leq^C \ell^{5/2} \beta \left(\frac{1}{1 + |z|}\right)^{\frac{1 + p}{3}}$, and thus (\ref{BootStrap3}) holds with $p$ improved to $\frac{p + 1}{3}$. Iterating, we obtain a sequence of estimates
\[
\ol{u}(z) \leq C(k)\beta \ell^{5/2}\left(\frac{1}{1 + |z|}\right)^{p_k}
\]
where $p_0 = 0$ and $p_{k} = \frac{1 + p_{k - 1}}{3}$. It is easy to check that the sequence $p_{k}$ increases monotonically to the limiting value $p_\infty = 1/2$.
\end{proof}

\bibliographystyle{amsalpha}

\end{document}